\newtheorem{theorem}{Theorem}[section]
\newtheorem{lemma}[theorem]{Lemma}
\newtheorem{proposition}[theorem]{Proposition}
\theoremstyle{definition}
\newtheorem{definition}[theorem]{Definition}
\theoremstyle{remark}
\newtheorem{remark}[theorem]{Remark}
\numberwithin{equation}{section}
\DeclareMathOperator{\id}{id}
 \DeclareMathOperator{\tr}{tr}
\newcommand{\forget}[1]{}
\def  \nuint {\raise10pt\hbox{$\nu$}\kern-6pt\int}
\def \Sp {{\cal S}}
\newcommand\Di{D\kern-6pt/}
\newcommand\cDi{{\mathcal D}\kern-6pt/}
\newcommand\spi{S\kern-6pt/}
\newcommand \cspi{\Sp\kern-6pt/}
\def \cal {\mathcal}
\newcommand\Z{\mathbb Z}
\definecolor{darkgreen}{cmyk}{1,0,1,.2}
\definecolor{m}{rgb}{1,0.1,1}
\DeclareMathOperator{\Ad}{Ad}
    \newcommand{\ka}{\mathfrak{a}}
\newcommand{\kg}{\mathfrak{g}}
\newcommand{\kk}{\mathfrak{k}}
\newcommand{\kp}{\mathfrak{p}}
\newcommand{\kt}{\mathfrak{t}}
\newcommand{\bK}{\mathbb{K}}
\begin{document}

\title{Cartan Motion Group and Orbital Integrals}

\author{Yanli Song}
\address{Department of Mathematics and Statistics, Washington University, St. Louis, Missouri 63130}
\email{yanlisong@wustl.edu}
\thanks{The first author was supported in part by NSF Grants DMS-1800667 and DMS-1952557.}

\author{Xiang Tang}
\address{Department of Mathematics and Statistics, Washington University, St. Louis, Missouri, 63130}
\email{xtang@wustl.edu}
\thanks{The second author was supported in part by NSF Grants DMS-1800666 and DMS-1952551.}

\subjclass{}

\dedicatory{This paper is dedicated to the occasion of the 40th birthday of cyclic cohomology}

\keywords{Cartan motion group, Mackey, orbital integrals}

\begin{abstract} In this short note, we study the variation of orbital integrals, as traces on the group algebra $G$, under the deformation groupoid. We show that orbital integrals are continuous under the deformation. And we prove that the pairing between orbital integrals and $K$-theory element of $C^*_r(G)$ stays constant with respect to the deformation for regular group elements, but vary at singular elements.    
\end{abstract}

\maketitle

\section{Introduction}

Connes introduced a beautiful construction of tangent groupoid \cite{connes} to present a groupoid proof of the Atiyah-Singer index theorem. The tangent groupoid provides a deep link between the Fredholm index of an elliptic operator and its local geometric information and has turned out to be extremely powerful in studying various generalized index problems, e.g. \cite{Connes-Skandalis, mont, DLN, vanerp1, vanerp2, VY1, VY2, AMY1, AMY2, PPT}, etc. 

The Connes tangent groupoid was applied  \cite{BCH} to study the $K$-theory of $C^*_r(G)$ and formulate a new approach to the Connes-Kasparov conjecture. More precisely, let $\mathfrak{g}$ and $\mathfrak{k}$ be the Lie algebras of $G$ and $K$, and $\theta$ be the Cartan involution on $\mathfrak{g}$. $\mathfrak{k}$ is the subspace of $\theta$ fixed points and $\mathfrak{p}$ is the $-1$ eigenspace of $\theta$. The adjoint action of $K$ on $\mathfrak{g}$ defines a $K$ action on $\mathfrak{g}/\mathfrak{k}$, which is isomorphic to $\mathfrak{p}$. The Cartan motion group is the semidirect product $K\ltimes \mathfrak{p}$, where $\mathfrak{p}$ is viewed as an abelian group. The deformation groupoid $\mathcal{G}$ is a smooth family of Lie groups over the interval $[0,1]$,
\[
\mathcal{G}:=K\times \mathfrak{p}\times [0,1].
\]
At $t=0$, $G_0$ is the Cartan motion group $K\ltimes \mathfrak{p}$; at $t>0$, $G_t$ is isomorphic to $G$ under the isomorphism $\varphi_t: K\times \mathfrak{p}\to G$ by
\[
\varphi_t(k,v)=k\exp_G(tv),\ (k_1, v_1, t)\cdot_t (k_2, v_2, t)=\varphi^{-1}_t\big(\varphi_t(k_1, v_1)\cdot \varphi_t(k_2, v_2)\big).
\] 
The Connes-Kasparov isomorphism conjecture (\cite{wassermann,nest,lafforgue,CHST}) has the following simple but beautiful formulation, i.e. the deformation groupoid $\mathcal{G}$ defines a natural isomorphism (See Remark \ref{rmk:CK}.)
\[
K_\bullet\big(C^*_r(G_0)\big)\to K_\bullet\big(C^*_r(G_1)\big). 
\]
Higson \cite{higson-mackey} connected Mackey's approach to induced representation theory to the study of the Connes-Kasparov isomorphism conjecture. Afgoustidis \cite{afgoustidis} obtained a new proof of the Connes-Kasparov isomorphism conjecture via the above deformation groupoid using Vogan's theory. 

In this article, we study variations of traces on $C^\infty_c(G)$ under the deformation groupoid. The traces we consider are from orbital integrals.

Let $G$ be a connected real reductive Lie group,  $K$ be a maximal compact subgroup of $G$. For an element $x\in K$, let $G_x$ be the centralizer subgroup of $G$ associated to $x$. The orbital integral is a trace $\tau_x$ on $C^\infty_c(G)$ defined by 
\[
\tau_x(f):=\int_{G/G_x} f(g\theta g^{-1}) dg. 
\]
The orbital integral $\tau_k$ plays a fundamental role in Harish-Chandra's theory of Plancherel measure. 

Let $C^*_r(G)$ be the reduced group $C^*$-algebra of $G$. Recently, Hochs and Wang \cite{HW} showed that $\tau_x$ defines a trace on the Harish-Chandra Schwartz algebra $\mathcal{C}(G)$ and applied the orbital integrals to study the $G$-index of an invariant Dirac operator on a proper cocompact manifold. They proved a delocalized index theorem to detect interesting information of the Connes-Kasparov index map and the $K$-theory of the reduced group $C^*$ algebra $C^*_r(G)$ when $G$ has discrete series representations. For a general $G$, we \cite{ST1} introduced a generalization of the orbital integral $\tau_x$ to a higher degree cyclic cocycle, called higher orbital integral, associated to a cuspidal parabolic subgroup $P$ of $G$ and extract the character information of limits of discrete series representations from the pairing between higher orbital integrals and $K_0(C^*_r(G))$, and with Hochs \cite{HST} obtained a higher index theorem for the pairing between the higher orbital integrals and the $G$-index of an invariant Dirac operator. 

In this paper, we focus on the classical orbital integrals $\tau_x$. We start with observing  the following continuity property of the orbital integral $\tau_x$ on the deformation groupoid $\mathcal{G}$.\\

\noindent{\bf Theorem.} (Theorem \ref{main thm})
{\em Suppose that $f$ is a smooth  function on $\mathcal{G}$ with compact support and $x \in K$ is a regular element.  Then 
\[
\lim_{t \to 0} \int_{G_t} f(g \cdot x \cdot  g^{-1}, t) \; d_t g = \int_K  \int_\kp f\left(kx k^{-1}, w-\text{Ad}_{kx k^{-1}}   w , 0\right ) \; dk dw .
\]}

Next we study the pairing between orbital integrals $\tau_x$ and $K_0$ group and prove the following rigidity property of the index pairing.\\

\noindent{\bf Theorem.} {\em (Theorem \ref{t0 thm}) Let $E$ be an irreducible $K$ representation and  $P^E_t$ an element of $K_0(C^*_r(G_t))$ associated to $E$. The index paring 
$\tau_x \left([P^E_t]\right)  \in \mathbb{C}$ is independent of $t$. When $t = 0$, we have that 
\[
\tau_x \left([P^E_0]\right)  = (-1)^{\frac{\dim \kp}{2}} \cdot \frac{\sum_{w \in W_K} (-1)^w \cdot e^{w(\mu + \rho)}(x)}{\prod_{\alpha \in \Delta^+(\kg, \kt)}\left( e^{\frac{\alpha}{2}} - e^{\frac{-\alpha}{2}} \right)(x)}.
\]	
}

It is worth pointing out that it is crucial to consider a regular element $x$ in the above theorem. In contrast, we show in Theorem \ref{thm:L2}  that the $L^2$-trace does vary with respect to the parameter $t$ and vanishes at $t=0$ with order $\dim(\mathfrak{p})$.

The above theorems suggest that it is possible to use the deformation groupoid to study the limit of the pairing between the higher cyclic cocycles introduced \cite{PPT} and \cite{ST1}  on $K_\bullet(C^*_r(G))$. Such a study is expected to exhibit more interesting properties of the higher cyclic cocycles on the Harish-Chandra Schwartz algebra $\mathcal{C}(G)$. We plan to study this problem in the near future. 

The article is organized as follows. In Section \ref{sec:semiproduct}, we briefly introduce the Cartan motion group, its group $C^*$-algebra and Fourier transform; in Section \ref{sec:limit}, we investigate the variation of orbital integrals with respect to the deformation groupoid and prove Theorem \ref{main thm}; in Section \ref{sec:pairing}, we study the variation of the pairing between orbital integrals and $K_0$ groups and prove Theorem \ref{t0 thm} and \ref{thm:L2}.\\

\noindent{\bf Acknowledgments.} We would like to thank Alexandre Afgoustidis, Wushi Gold-ring, Nigel Higson, Yuri Kordyukov, Shiqi Liu, Ryszard Nest, Sanaz Pooya,  and Sven Raum for inspiring discussions.

\section{Cartan Motion Group}\label{sec:semiproduct}
Let $K$ be a connected compact Lie group and $\kp$ is an even dimensional Eculid-ean space on which $K$ acts. For simplicity, we assume that the $K$-action on $\kp$ is spin so that we have the following diagram.
\[
 \begin{tikzcd}
   & \text{spin}(\kp) \arrow[d] \\
 K \arrow[ru]\arrow[r] & SO(\kp)
   \end{tikzcd}
    \]
Let $S_\kp = S_\kp^+ \oplus S_\kp^-$ be the $\mathbb{Z}_2$-graded spin representation of $\mathrm{Cliff}(\kp)$. In addition, we assume that the $K$-fixed part of $\kp$ is trivial.  

\begin{definition}
The semidirect product $K \ltimes \kp$ is the group whose underlying set is the Cartesian product $K \times \kp$, equipped with the product law	
\[
(k_1, v_1) \cdot_0 (k_2, v_2) = \left(k_1k_2, k_2^{-1}\cdot v_1 +  v_2\right), \quad k_1, k_2 \in K, \quad  \text{and} \quad v_1, v_2 \in \kp,
\]
which is called the \emph{Cartan motion group}. 
\end{definition}

\begin{definition}
The \emph{reduced group $C^*$-algebra} of $K \ltimes \kp$, denoted by $C^*_r(K \ltimes \kp)$, is the completion of the convolution algebra $C_c^\infty(K \ltimes \kp)$ in the norm obtained from the left regular representation of $C_c^\infty(K \ltimes \kp)$ as bounded convolution operators on the Hilbert space $L^2(K \ltimes \kp)$. 
\end{definition}

Fix the Haar measure on $K$ so that the volume of $K$ equals one. Together with a $K$-invariant measure on vector space $\kp$, we obtain a Haar measure on $K \ltimes \kp$. If $f$ is a smooth, compactly supported function on $ K \ltimes \kp$, we define its \emph{Fourier transform}, a smooth function from $\widehat{\kp}$ into the smooth functions on $K \times K$ by the formula
\[
\widehat{f}(\varphi) (k_1, k_2) = \int_\kp f(k_1 k_2^{-1},  v) \varphi(k_2^{-1}\cdot v) \; dv.
\]
Here $\varphi \in \widehat{\kp}, k_1, k_2 \in K, v \in \kp$ and $e$ is the identity element in $K$. 

For a fixed $\varphi \in \widehat{\kp}$, we shall think of $\widehat{f}(\varphi)$ as an integral kernel and hence as a compact operator on $L^2(K)$. The Fourier transform $\widehat{f}$ is therefore a function from $\widehat{\kp}$ into $\mathbb{K}\left(L^2(K)\right)$, where $\mathbb{K}\left(L^2(K)\right)$ denotes the space of compact operators on $L^2(K)$. The function is equivariant for the natural action of $K$ on $\widehat{\kp}$ and the conjugacy action of $K$ on $\mathbb{K}\left(L^2(K)\right)$ induced from the right regular representation of $K$ on $L^2(K)$. We define
\[
\begin{aligned}
&C_0\left(\widehat{\kp}, \mathbb{K}\left(L^2(K)\right)\right)\\
 =& \left\{ \text{continuous functions from $\widehat{\kp}$ into $\mathbb{K}\left(L^2(K)\right)$,  vanishing at infinity} \right\}
\end{aligned}
\]
and 
\[
C_0\left(\widehat{\kp}, \mathbb{K}\left(L^2(K)\right)\right)^K = \left\{\text{$K$-equivariant functions in $C_0\left(\widehat{\kp}, \mathbb{K}\left(L^2(K)\right)\right)$} \right\}.
\]

The Fourier transform provides a concrete description of the  (reduced) group $C^*$-algebra $C^*_r(K\ltimes \mathfrak{p})$.
\begin{theorem}
The Fourier transform $f \mapsto \widehat{f}$ extends to a $C^*$-algebra isomorphism 	
\[
C_r^*(K \ltimes \kp)  \cong C_0\left(\widehat{\kp}, \mathbb{K}\left(L^2(K)\right)\right)^K
\]
\end{theorem}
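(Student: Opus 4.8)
The plan is to verify in three stages that the Fourier transform is the asserted isomorphism. First I would check that $f\mapsto\widehat f$ is a $*$-homomorphism from the convolution algebra $C_c^\infty(K\ltimes\kp)$ landing in the $K$-equivariant algebra $C_0(\widehat\kp,\mathbb K(L^2(K)))^K$; this is purely computational. Second, I would realise the reduced norm through the left regular representation and diagonalise it by a partial Fourier transform, thereby identifying $\|f\|_{C^*_r}$ with the supremum of the fibrewise operator norms; this is the technical heart. Third, I would prove that the image is dense, which together with the isometry (forcing the image of the completion to be closed) yields surjectivity. It is worth noting in passing that $K\ltimes\kp$ is amenable, being an extension of the abelian group $\kp$ by the compact group $K$, so the full and reduced group $C^*$-algebras coincide; this is convenient but not essential.

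For the algebraic identities one writes out convolution using the product law and $(k,v)^{-1}=(k^{-1},-kv)$, obtaining
\[
(f*g)(k_1k_2^{-1},v)=\int_K\int_\kp f(k_3,v_1)\,g\!\left(k_3^{-1}k_1k_2^{-1},\,v-k_2k_1^{-1}k_3v_1\right)\,dv_1\,dk_3 .
\]
Substituting this into the definition of the hat, changing $v$ to $v'=v-k_2k_1^{-1}k_3v_1$, and using that $\varphi$ is a character to split $\varphi(k_2^{-1}v)=\varphi(k_1^{-1}k_3v_1)\,\varphi(k_2^{-1}v')$, the triple integral factorises; the substitution $u=k_3^{-1}k_1$ then rewrites it as $\int_K\widehat f(\varphi)(k_1,u)\,\widehat g(\varphi)(u,k_2)\,du$, the composition of integral kernels, so $\widehat{f*g}(\varphi)=\widehat f(\varphi)\,\widehat g(\varphi)$. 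The same kind of change of variables, applied to $f^*(k,v)=\overline{f(k^{-1},-kv)}$, gives $\widehat{f^*}(\varphi)(k_1,k_2)=\overline{\widehat f(\varphi)(k_2,k_1)}$, i.e.\ $\widehat{f^*}(\varphi)=\widehat f(\varphi)^*$. Finally, writing $(k_0\cdot\varphi)(v)=\varphi(k_0^{-1}v)$ and letting $\rho$ be the right regular representation of $K$, a one-line computation yields $\widehat f(k_0\cdot\varphi)=\rho_{k_0}\widehat f(\varphi)\rho_{k_0}^{-1}$, so $\widehat f$ indeed lands in the $K$-equivariant subalgebra.

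For the norm I would diagonalise $\lambda$ on $L^2(K\ltimes\kp)\cong L^2(K\times\kp)$ by the (untwisted) partial Fourier transform in the $\kp$-variable,
\[
W\colon L^2(K\times\kp)\xrightarrow{\ \cong\ }\int_{\widehat\kp}^{\oplus}L^2(K)\,d\varphi,\qquad (W\xi)(\varphi)(k)=\int_\kp\xi(k,v)\,\varphi(v)\,dv,
\]
which is unitary once the dual measure on $\widehat\kp$ is fixed by Plancherel. A computation parallel to the product formula (again splitting $\varphi$ on characters, using the orthogonality relation $\int_\kp\overline{\psi(v)}\varphi(v)\,dv$ to collapse the $\widehat\kp$-integral, and substituting $u=k_3^{-1}k_1$) shows that $W\lambda(f)W^*$ is the decomposable operator with fibre $\widehat f(\varphi)$ on $L^2(K)$; here the $k_2^{-1}$ twist appearing in the stated kernel is produced by the substitution, not by $W$ itself. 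Since $W$ is unitary and a decomposable operator has norm equal to the essential supremum of its fibre norms, $\|f\|_{C^*_r}=\|\lambda(f)\|=\operatorname*{ess\,sup}_{\varphi\in\widehat\kp}\|\widehat f(\varphi)\|_{\mathcal B(L^2(K))}$. Because $f$ is smooth with compact support, each $\widehat f(\varphi)$ is an integral operator with jointly smooth kernel on $K\times K$, hence Hilbert--Schmidt and compact, while $\varphi\mapsto\widehat f(\varphi)$ is norm-continuous and vanishes at infinity by Riemann--Lebesgue; thus $\widehat f\in C_0(\widehat\kp,\mathbb K(L^2(K)))^K$ and the essential supremum is the genuine $C_0$-norm. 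This shows $f\mapsto\widehat f$ is isometric for the reduced norm and so extends to an isometric $*$-homomorphism $C^*_r(K\ltimes\kp)\hookrightarrow C_0(\widehat\kp,\mathbb K(L^2(K)))^K$ with closed image.

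The remaining step, surjectivity, is the one I expect to be the main obstacle. The image $A$ is a closed $*$-subalgebra of $B:=C_0(\widehat\kp,\mathbb K(L^2(K)))^K$, and one must show $A=B$. I would exploit that $B$ is a $C_0(\widehat\kp/K)$-algebra: its fibre over the orbit of $\varphi$ is the algebra of $\rho(K_\varphi)$-equivariant compact operators $\mathbb K(L^2(K))^{K_\varphi}$, where $K_\varphi$ is the stabiliser. Since multiplying a Fourier transform $\widehat f$ by a $K$-invariant function $\phi\in C_0(\widehat\kp/K)$ is again a Fourier transform (it corresponds to convolving $f$ in the $\kp$-direction), $A$ is a $C_0(\widehat\kp/K)$-submodule of $B$; and the computation of the fibres shows that, at each orbit, the operators $\widehat f(\varphi)$ exhaust $\mathbb K(L^2(K))^{K_\varphi}$. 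A continuous-field Stone--Weierstrass argument (a $C_0(\widehat\kp/K)$-subalgebra that is onto in every fibre is everything) then forces $A=B$; equivalently, one may match the fibrewise data against the Mackey-machine parametrisation of $\widehat{K\ltimes\kp}$ by orbit–stabiliser pairs. The delicate points are the $K$-equivariant bookkeeping at singular orbits and the verification that smooth compactly supported $f$ produce all fibrewise equivariant kernels; granting these, the isometry of the previous paragraph completes the proof.
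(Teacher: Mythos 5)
The paper does not actually prove this statement; it cites Higson, \emph{The Mackey analogy and $K$-theory}, Theorem~3.2, so any proof you write is ``a different route'' by default. Your route is the correct direct one, and the parts you carry out are right: I checked the convolution identity against the product law $(k_1,v_1)\cdot_0(k_2,v_2)=(k_1k_2,\,k_2^{-1}v_1+v_2)$, and indeed $\widehat{f*g}(\varphi)=\widehat f(\varphi)\,\widehat g(\varphi)$, $\widehat{f^*}(\varphi)=\widehat f(\varphi)^*$, and $\widehat f(k_0\cdot\varphi)=\rho_{k_0}\widehat f(\varphi)\rho_{k_0}^{-1}$ all come out as you say; likewise the untwisted partial Fourier transform $W$ does conjugate $\lambda(f)$ into the decomposable operator with fibre kernel $\widehat f(\varphi)(k,u)$ (the twist $\varphi(k_2^{-1}v)$ is exactly what the change of variables produces), so the isometry onto a closed $*$-subalgebra of $C_0\bigl(\widehat\kp,\mathbb{K}(L^2(K))\bigr)^K$ is complete as written.

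The one step that is a plan rather than a proof is the one you flag yourself: the assertion that evaluation at $\varphi$ maps the image onto the fibre $\mathbb{K}(L^2(K))^{K_\varphi}$ carries the entire weight of surjectivity and is not established. It is true, and to close it you would note that $\widehat f(\varphi)=\pi_\varphi(f)$ for the induced representation $\pi_\varphi=\Ind_{\kp}^{K\ltimes\kp}\varphi$ realized on $L^2(K)$, decompose $L^2(K)$ under $\rho|_{K_\varphi}$ by Peter--Weyl, and check that the isotypic matrix blocks are exhausted --- i.e.\ run the Mackey machine at each orbit, including the singular ones. The route taken in the cited reference sidesteps exactly this point: Fourier transform in the $\kp$-variable alone identifies $C^*_r(K\ltimes\kp)\cong C_0(\widehat\kp)\rtimes K$, and for a \emph{compact} group $K$ acting on a $C^*$-algebra $A$ one has the general structural isomorphism $A\rtimes K\cong\bigl(A\otimes\mathbb{K}(L^2(K))\bigr)^K$, which packages fibrewise surjectivity once and for all. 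Your approach buys the explicit kernel formulas that the paper actually uses downstream (e.g.\ in Proposition~\ref{prop tau}); the crossed-product approach buys a proof with no orbit-by-orbit bookkeeping. Either is acceptable, but as submitted the surjectivity half still needs to be executed.
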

\begin{proof}
See \cite[Theorem 3.2]{higson-mackey}.	
\end{proof}

We study the \emph{orbital integrals} on Cartan motion group. Let $T\subseteq K$ be the maximal torus of $K$ and $x \in T$ be a regular element. In particular, $x$ gives an endomorphism from $\kp$ to itself such that $\det_\kp(\id - x) \neq 0$. 
We write $\mathcal{O}_x$ the conjugacy class of $(x, 0)\in K \ltimes \kp$.

\begin{definition}
For any $f \in C_c(K \ltimes \kp)$, we define the orbital integral 
\[
\tau_x(f) \colon = \int_{(k, v) \in \mathcal{O}_x} f(k, v)\; dkdv. 
\]	
\end{definition}

\begin{lemma}
\label{integral lem}
If $x$ is regular in $T$, then the orbital integral associated to $x$ for the Cartan motion group equals 
\[
\begin{aligned}
\tau_x(f) = \frac{1}{\mathrm{det}_\kp(\id - x )} \cdot \int_K \int_\kp f(k\cdot x \cdot k^{-1}, v ) \; dk dv.
\end{aligned}
\]
\end{lemma}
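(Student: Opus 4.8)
The plan is to realize the orbital integral as an integral over the homogeneous space $G/G_x$, where $G = K \ltimes \kp$ and $G_x$ is the centralizer of $(x,0)$, and then to transport it to an integral over $G$ itself by an explicit substitution in the $\kp$-variable. First I would determine the centralizer. Applying the product law, the condition $(k,v)\cdot_0 (x,0) = (x,0)\cdot_0 (k,v)$ reads $(kx, x^{-1}\cdot v) = (xk, v)$, which forces $kx = xk$ together with $x\cdot v = v$. Since $x$ is regular, $\det_\kp(\id - x)\neq 0$ gives $v = 0$, and a regular $x \in T$ has centralizer exactly $T$ in $K$. Hence $G_x = T$, embedded as $(T,0)$, and the conjugacy class $\mathcal{O}_x$ is the image of the orbit map $gT \mapsto g\cdot_0 (x,0)\cdot_0 g^{-1}$ on $G/T$.

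Next I would compute this orbit map explicitly. Using $(k,v)^{-1} = (k^{-1}, -k\cdot v)$, a short calculation gives
\[
(k,v)\cdot_0 (x,0)\cdot_0 (k,v)^{-1} = \left(kxk^{-1},\; k\cdot(x^{-1}-\id)\cdot v\right),
\]
and one checks this depends only on the coset $gT$, since $T$ commutes with $x$ both in $K$ and as linear operators on $\kp$; this is consistent with the centralizer being $T$. The orbital integral then becomes $\int_{G/T} f\!\left(kxk^{-1},\, k\cdot(x^{-1}-\id)\cdot v\right)\, d(gT)$.

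Because the Haar measure on $K$ is normalized with $\mathrm{vol}(K)=1$ (so $\mathrm{vol}(T)=1$) and $dk\,dv$ is the bi-invariant Haar measure on $G$, the invariant integral over $G/T$ of a right-$T$-invariant function unfolds to the integral over $G$, yielding
\[
\tau_x(f) = \int_K\int_\kp f\!\left(kxk^{-1},\; k\cdot(x^{-1}-\id)\cdot v\right)\, dv\,dk.
\]
I would then substitute $w = k\cdot(x^{-1}-\id)\cdot v$ in the inner integral for each fixed $k$. The Jacobian is $\det_\kp\!\big(k(x^{-1}-\id)\big)$; as $K$ is connected and maps into $SO(\kp)$ we have $\det_\kp(k)=\det_\kp(x^{-1})=1$, whence $\det_\kp(x^{-1}-\id)=\det_\kp(\id-x)$, which is moreover positive for regular $x$ since the eigenvalues of $x$ on $\kp\otimes\CC$ occur in conjugate pairs. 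Substituting produces the factor $1/\det_\kp(\id-x)$ and the claimed formula.

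The algebra of the semidirect product is routine; the point requiring genuine care is the measure bookkeeping — confirming that the invariant measure on $G/T$ is precisely the one for which the unfolding to $dk\,dv$ holds under the normalization $\mathrm{vol}(T)=1$ — together with pinning down the positivity of $\det_\kp(\id-x)$ so that no absolute value intrudes on the final constant. The change of variables is the heart of the argument but is immediate once the orbit map and centralizer are in hand.
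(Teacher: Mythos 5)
Your argument is correct and follows essentially the same route as the paper: compute the conjugation $(k,v)\cdot_0(x,0)\cdot_0(k,v)^{-1}=\left(kxk^{-1},\,k(x^{-1}-\id)\cdot v\right)$ and then perform the linear change of variables in $v$, whose Jacobian is $\det_\kp(\id-x)$ because $k$ and $x^{-1}$ act with determinant one. The extra care you take with the centralizer $G_x=T$, the unfolding from $G/T$ to $K\times\kp$, and the positivity of $\det_\kp(\id-x)$ fills in details the paper leaves implicit, but does not change the argument.
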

\begin{proof}
For any $(k, v) \in K \ltimes \kp$, we have that  
\[
\begin{aligned} 
(k, v) \cdot_0 (x, 0 ) \cdot_0 (k, v)^{-1} = &(k, v) \cdot_0 (x, 0 ) \cdot_0 (k^{-1}, -k\cdot v)\\
=& (k\cdot x, x^{-1}\cdot v ) \cdot_0 (k^{-1}, -k \cdot v)\\
=&(k\cdot x \cdot k^{-1}, kx^{-1}\cdot v  - k \cdot v).
\end{aligned}
\]
The lemma follows immediately from change of variable 
\[
v \mapsto kx^{-1}\cdot v  - k\cdot v=kx^{-1}(\id -x) \cdot v. 
\]
\end{proof}

For any finite dimensional $K$-representation $E$, we denote by $\chi_E$ the character of $E$. 

\begin{lemma}
The determinant function $\det_\kp(\id-x)$ has the following formula, 	
\[
\mathrm{det}_\kp(\id - x ) = (-1)^{\frac{\dim \kp}{2}} \cdot\left(\chi_{S_\kp^+}(x) -\chi_{S_\kp^-}(x) \right)^2,
\]
where $\chi_{S_\kp^\pm}$ is the character of $S_\kp^\pm$. 
\end{lemma}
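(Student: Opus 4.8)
The plan is to compute the characters of the spin representations $S_\kp^\pm$ explicitly in terms of the action of $x$ on $\kp$ and then verify the claimed identity by a direct comparison with the eigenvalue expansion of $\det_\kp(\id - x)$. Since $\kp$ is even-dimensional, say $\dim \kp = 2n$, and $x$ acts orthogonally, I would first diagonalize the action of $x$ on the complexification $\kp \otimes \mathbb{C}$, writing the eigenvalues as conjugate pairs $e^{\pm i \theta_1}, \dots, e^{\pm i \theta_n}$. Because $x$ is regular, none of the $\theta_j$ is an integer multiple of $2\pi$ in a way that makes $e^{i\theta_j}=1$, which is exactly the statement $\det_\kp(\id - x) \neq 0$ recorded earlier.

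First I would write down the determinant in these coordinates:
\[
\det_\kp(\id - x) = \prod_{j=1}^{n} (1 - e^{i\theta_j})(1 - e^{-i\theta_j}) = \prod_{j=1}^n \bigl(2 - 2\cos\theta_j\bigr).
\]
Next I would recall the standard formula for the character of the graded spin representation of $\mathrm{Cliff}(\kp)$ under a maximal torus: in terms of the rotation angles $\theta_j$ one has the well-known factorization
\[
\chi_{S_\kp^+}(x) - \chi_{S_\kp^-}(x) = \prod_{j=1}^n \bigl(e^{i\theta_j/2} - e^{-i\theta_j/2}\bigr),
\]
which is precisely the Weyl-denominator-type expression coming from the fact that the difference of half-spin characters is the product over the positive weights of $\kp$ of $(e^{\text{weight}/2} - e^{-\text{weight}/2})$. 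Squaring this gives
\[
\bigl(\chi_{S_\kp^+}(x) - \chi_{S_\kp^-}(x)\bigr)^2 = \prod_{j=1}^n \bigl(e^{i\theta_j/2} - e^{-i\theta_j/2}\bigr)^2 = \prod_{j=1}^n \bigl(2 - 2\cos\theta_j\bigr) \cdot (-1)^n.
\]
Here the sign $(-1)^n = (-1)^{\dim\kp/2}$ emerges because $\bigl(e^{i\theta/2}-e^{-i\theta/2}\bigr)^2 = -(2 - 2\cos\theta)$, each factor contributing one minus sign.

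Comparing the two displays, I would conclude
\[
\det_\kp(\id - x) = (-1)^{\dim\kp/2} \bigl(\chi_{S_\kp^+}(x) - \chi_{S_\kp^-}(x)\bigr)^2,
\]
as desired. The main obstacle, and the only point requiring genuine care, is the sign bookkeeping together with a clean derivation of the half-spin character formula; I would want to pin down the convention for the weights of $S_\kp^\pm$ (they are the $\frac12(\pm\theta_1 \pm \cdots \pm \theta_n)$ with the grading given by the parity of the number of minus signs) so that the product over positive weights is unambiguous, and then verify that the square is independent of the choice of sign of each $\theta_j$, which it is since the expression is symmetric under $\theta_j \mapsto -\theta_j$. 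The regularity hypothesis is what guarantees all factors are nonzero, so the identity is an equality of nonvanishing quantities rather than a degenerate $0=0$.
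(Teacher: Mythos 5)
Your proof is correct and follows essentially the same route as the paper's: both diagonalize the action of $x$ on $\kp$ (the paper via the positive weights $\alpha \in \Delta^+(\kp,\kt)$ of the torus action, you via rotation angles $\theta_j$, which is the same data), write $\det_\kp(\id-x)$ as the product of $(1-e^{\alpha})(1-e^{-\alpha})$, and extract the sign $(-1)^{\dim\kp/2}$ to recognize the square of the standard product formula for $\chi_{S_\kp^+}-\chi_{S_\kp^-}$. Your write-up is in fact slightly more explicit than the paper's, which leaves the identification of $\prod_{\alpha}(e^{\alpha/2}-e^{-\alpha/2})$ with the difference of half-spin characters implicit.
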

\begin{proof}
Let $\kt$ be the Lie algebra of $T$ and $\Delta^+(\kp, \kt)$ be a fixed  set of positive roots for the $T$-action on $\kp$. Suppose that $x = e^H$ with  $H\in \kt$. We compute that 
\[
\begin{aligned}
\text{det}_\kp(\text{id} -  e^H)=&\prod_{\alpha\in \Delta^+(\kp, \kt)}\left(1 - e^{\alpha(H)}\right) \cdot \left(1 - e^{-\alpha(H)}\right)\\
=&(-1)^{\frac{\dim \kp}{2}} \cdot \prod_{\alpha\in \Delta^+(\kp, \kt)}\left(e^{\frac{\alpha(H)}{2}} - e^{-\frac{\alpha(H)}{2}}\right)^2.
\end{aligned}
\]	

\end{proof}

\begin{proposition}
\label{prop tau}
Let $E$ be a finite dimensional $K$-representation and  $x$ be a regular element in $T$. Suppose that $g(z)$ is a $K$-invariant Schwartz function on $\widehat{\kp}$ and  $f \in C_0(K \ltimes \kp)$ is the inverse Fourier transform of 
\[
g(z) \cdot \id_E \in  C_0 \left(\widehat{\kp}, \bK \left(L^2(K)\right) \right)^K, \quad z \in \widehat{\kp},
\]
where by the Peter-Weyl theorem $\id_E$ is an element of $\bK \left(L^2(K)\right)$.
Then  
\[
\tau_x(f) = (-1)^{\frac{\dim \kp}{2}} \cdot \frac{\chi_E(x) }{\left(\chi_{S_\kp^+}(x) -\chi_{S_\kp^-}(x) \right)^2} \cdot g(0). 
\]	
\begin{proof}
By Lemma \ref{integral lem},  
\[
\begin{aligned}
\tau_x(f) =& \frac{1}{\text{det}_\kp(\id - x)} \cdot \int_K \int_\kp f(k\cdot x \cdot k^{-1}, v ) \; dk dv. 
\end{aligned}
\]	
For any regular $z\in \widehat{\kp}$, it determines a $K \ltimes \kp$-representation on $L^2(K)$ given by 
\[
\left(\pi_z(x, v)s\right)(u) = e^{i\langle \mathrm{Ad}_uz, v\rangle } \cdot s(x^{-1}u), \quad u \in K,
\]
where $s \in L^2(K)$. By the Fourier transform, we have that 
\[
\begin{aligned}
f(k\cdot x \cdot k^{-1}, v ) =& \int_{\widehat{\kp}}g(z) \cdot \tr\left(\pi_z(kx k^{-1}, v) \circ \id_E \right) \; dz\\
=&\chi_E(x) \cdot \int_{\widehat{\kp}}\int_K g(z) \cdot e^{i\langle \mathrm{Ad}_uz, v\rangle } \; du dz
\end{aligned}
\]
Therefore, 
\[
\begin{aligned}
\tau_x(f) =&  \frac{\chi_E(x)}{\text{det}_\kp(\text{id} - x)} \cdot \int_K \int_\kp \int_{\widehat{\kp}}\int_K g(z) \cdot e^{i\langle \mathrm{Ad}_uz, v\rangle } \; du dz dk dv\\
=& (-1)^{\frac{\dim \kp}{2}} \cdot\frac{\chi_V(x) }{\left(\chi_{S_\kp^+}(x) -\chi_{S_\kp^-}(x) \right)^2} \cdot \int_\kp\int_{\widehat{\kp}}g(z) \cdot e^{i \langle z, v\rangle} \; dzdv\\
=& (-1)^{\frac{\dim \kp}{2}} \cdot\frac{\chi_V(x) }{\left(\chi_{S_\kp^+}(x) -\chi_{S_\kp^-}(x) \right)^2} \cdot g(0). 
\end{aligned}
\]
\end{proof}

\end{proposition}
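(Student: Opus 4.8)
The plan is to compute $\tau_x(f)$ by combining Lemma~\ref{integral lem} with the explicit spectral description of $f$ as an inverse Fourier transform. First I would invoke Lemma~\ref{integral lem} to rewrite
\[
\tau_x(f) = \frac{1}{\det_\kp(\id - x)} \cdot \int_K \int_\kp f(k\cdot x \cdot k^{-1}, v)\; dk\, dv,
\]
reducing the problem to evaluating the integrand $f(kxk^{-1}, v)$. Since $f$ is the inverse Fourier transform of $g(z)\cdot \id_E$, I would next need an explicit formula recovering $f$ from its Fourier transform. The natural tool is the family of irreducible $K\ltimes\kp$-representations $\pi_z$ on $L^2(K)$ parametrized by regular $z \in \widehat{\kp}$, given by $\left(\pi_z(x,v)s\right)(u) = e^{i\langle \mathrm{Ad}_u z, v\rangle}\cdot s(x^{-1}u)$. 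The Fourier inversion then expresses $f(kxk^{-1}, v)$ as an integral over $\widehat{\kp}$ of $g(z)\cdot \tr\!\left(\pi_z(kxk^{-1}, v)\circ \id_E\right)$.

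The key computational step is evaluating this trace. I would separate the two pieces of $\pi_z$: the operator $s \mapsto s(x^{-1}u)$ is a translation by $x$, and $\id_E$ is the projection (under Peter--Weyl) onto the $E$-isotypic block. The trace of the composition should factor, with the translation part contributing the character $\chi_E(x)$ and the phase part $e^{i\langle \mathrm{Ad}_u z, v\rangle}$ contributing an integral over the $K$-variable $u$. Tracking this carefully gives
\[
f(kxk^{-1}, v) = \chi_E(x)\cdot \int_{\widehat{\kp}}\int_K g(z)\cdot e^{i\langle \mathrm{Ad}_u z, v\rangle}\; du\, dz,
\]
so the dependence on $k$ drops out and the $\int_K dk$ merely contributes a factor of $1$ (since $K$ has volume one).

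Substituting back, I would then integrate over $v \in \kp$ first. Using the $K$-invariance of the measure on $\kp$ to absorb the $\mathrm{Ad}_u$ (so that $\int_\kp e^{i\langle \mathrm{Ad}_u z, v\rangle}\,dv$ becomes, after changing variables, $\int_\kp e^{i\langle z, v\rangle}\,dv$ independently of $u$), the inner $\int_K du$ integrates to $1$, and Fourier inversion on $\kp$ collapses $\int_\kp \int_{\widehat\kp} g(z)\, e^{i\langle z,v\rangle}\,dz\,dv$ to the point evaluation $g(0)$. Finally I would replace $\det_\kp(\id-x)$ using the preceding lemma, $\det_\kp(\id-x) = (-1)^{\dim\kp/2}\left(\chi_{S_\kp^+}(x) - \chi_{S_\kp^-}(x)\right)^2$, to obtain the stated formula. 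The main obstacle is the trace computation: one must justify interchanging the trace with the integrals, identify $\id_E$ correctly as a finite-rank projection via Peter--Weyl so that $\tr(\pi_z(x,v)\circ\id_E)$ genuinely produces $\chi_E(x)$ times a scalar, and ensure the normalization of measures on $K$, $\kp$, and $\widehat\kp$ is consistent with the Fourier inversion used. I note a minor typo to correct: the final lines write $\chi_V(x)$ where $\chi_E(x)$ is intended.
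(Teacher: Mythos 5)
Your proposal is correct and follows essentially the same route as the paper: reduce via Lemma \ref{integral lem}, express $f(kxk^{-1},v)$ through the representations $\pi_z$ and the trace against $\id_E$ (yielding $\chi_E(x)$ by conjugation invariance), collapse the $v$- and $z$-integrals by Fourier inversion to $g(0)$, and substitute the determinant formula. The typo $\chi_V$ for $\chi_E$ that you flag is indeed present in the paper's proof.
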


\section{Deformation to the Cartan motion groups}\label{sec:limit}
In this section, we study the limit of the orbital integral under the deformation to the Cartan motion groups. Let $G$ be a connected, linear,  real reductive Lie group and $K$ be its maximal compact subgroup. Let $\kg$ and $\kk$ be the Lie algebra of $G$ and $K$, and $\kg = \kk \oplus \kp$ be the Cartan decomposition. Let $dg$ be the standard Haar measure on $G$. Let $\ka$ be the maximal abelian sub-algebra in $\kp$ and $\ka^+$ the positive Weyl chamber. 

\begin{lemma}
The following identity holds
\[
\int_G f(g) dg = \int_K \int_{\ka^+} \int_K f\left(k\exp(v)k'\right) \cdot \prod_{\alpha \in \Delta(\kp, \ka)} \left(\sinh(\alpha(v))\right)^{n_\alpha}\; dk dvdk',
\]	
where $\Delta(\kp, \ka)$ is the set of restricted roots of $\ka$-action on $\kp$ and $n_\alpha$ is the multiplicity. 
\end{lemma}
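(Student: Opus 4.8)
The plan is to deduce this integration formula from the Cartan (or $KAK$) decomposition of $G$ together with a Jacobian computation, following the classical approach to polar-type integration formulas. The starting point is the Cartan decomposition: the smooth map
\[
\Phi\colon K \times \ka^+ \times K \longrightarrow G, \qquad \Phi(k, v, k') = k\exp(v)k',
\]
is surjective onto the regular part of $G$, which is an open dense subset whose complement has Haar measure zero. It therefore suffices to establish the identity with the left-hand integral restricted to this regular locus, where $\Phi$ is a submersion and all objects are smooth.

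Next I would analyze the fibers of $\Phi$. Since $v$ lies in the \emph{open} chamber $\ka^+$, its value is uniquely recovered from $\Phi(k,v,k')$, and the only remaining ambiguity is the action of the centralizer $M = Z_K(\ka)$ via $(k, v, k') \mapsto (km, v, m^{-1}k')$. Thus $\Phi$ descends to a diffeomorphism from $(K \times_M K) \times \ka^+$ onto the regular set, and the constant fiber volume $\vol(M)$ is absorbed into the normalization of the measures (together with the convention $\vol(K) = 1$ fixed earlier).

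The heart of the proof is the Jacobian of $\Phi$. Trivializing $TG$ by left translation, I would compute $d\Phi$ relative to the splitting $\kk \oplus \ka \oplus \kk$ of the source and the Cartan decomposition $\kg = \kk \oplus \kp$ of the target. The $\ka$-direction supplies the radial part with trivial Jacobian, while the two $\kk$-directions, after commuting $\Ad(\exp v) = e^{\ad(v)}$ past the factors, are governed by the action of $\ad(v)$ on the restricted root spaces. Decomposing $\kg$ into root spaces $\kg_\alpha$ for $\alpha \in \Delta(\kp, \ka)$, on which $\ad(v)$ acts by the scalar $\alpha(v)$, the relevant operator on each pair $\kg_\alpha \oplus \kg_{-\alpha}$ has determinant $\bigl(e^{\alpha(v)} - e^{-\alpha(v)}\bigr)^{n_\alpha} = \bigl(2\sinh \alpha(v)\bigr)^{n_\alpha}$. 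Taking the product over positive roots yields the density $\prod_{\alpha \in \Delta(\kp, \ka)} \bigl(\sinh \alpha(v)\bigr)^{n_\alpha}$ up to the constant $\prod_\alpha 2^{n_\alpha}$, which together with $\vol(M)$ fixes the overall normalization.

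The main obstacle is the careful bookkeeping in this Jacobian step: one must correctly identify the directions in $\kk \oplus \kk$ transverse to the stabilizer $\mathfrak{m} = \mathrm{Lie}(M)$, so that the determinant is taken on the complement of $\mathfrak{m}$ rather than on all of $\kk \oplus \kk$, and one must track the powers of $2$ and the factor $\vol(M)$ that are implicit in the normalizations adopted in the statement. Once the linear algebra on the root space decomposition is arranged, the remaining computation is simply the diagonalization of $\ad(v)$ and is routine.
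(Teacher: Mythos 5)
The paper offers no proof of its own here---it simply cites Knapp, \emph{Representation theory of semisimple groups}, Proposition 5.28---and your argument is precisely the standard derivation of that integration formula: the $KAK$ decomposition, identification of the generic fiber of $(k,v,k')\mapsto k\exp(v)k'$ over the regular set with $M=Z_K(\ka)$, and the Jacobian computed from $\Ad(\exp v)$ acting on the restricted root spaces. The sketch is correct; the only point to finish carefully is the one you already flag, namely that in a $\theta$-adapted orthonormal basis the block determinant of the map $\kk_\alpha\oplus\kk_\alpha\to\kk_\alpha\oplus\kp_\alpha$ comes out to exactly $\left(\sinh\alpha(v)\right)^{n_\alpha}$, so the powers of $2$ and the factor $\mathrm{vol}(M)$ are matters of measure normalization rather than genuine discrepancies.
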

\begin{proof}
See \cite[Proposition 5. 28]{kanpp-book}. 	
\end{proof}

For every non-zero real number $t$, we define a group $G_t$ by using the global diffeomorphism $\varphi_t \colon K \times \kp \to G$ given by 
\[
(k,v) \mapsto k \exp_G(tv).
\]
We define a family of groups 
\[
\mathcal{G} = \begin{cases}
 G_t, & t > 0,\\
 &\\
 G_0 = K \ltimes \kp, & t = 0.
 \end{cases}
\]
The bijection
\[
K \times \kp \times [0, 1] \to \mathcal{G}
\]
defined by the formula
\[
(k, v, t) \mapsto \begin{cases}
(k, v, 0), & t = 0,\\
 &\\
\left(k\exp(tv), t \right), & t > 0, 
 \end{cases}
\]
is a diffeomorphism. As a result, every smooth function $f$ on $\mathcal{G}$ has the following form
\[
\begin{cases}
f(k, v, 0) = F(k, v, 0),\\
 &\\
f(k\exp(tv), t) = F(k, v, t), & t > 0, 
 \end{cases}
\]
for some smooth function $F$ on $K \times \kp \times [0, 1]$. 

\begin{definition}\label{defn:haar-measure-t}
For every positive real number $t$, we define the Haar measure $d_t g$ on $G_t$ by the following formula	
\[
\begin{aligned}
\int_{G_t} f(g, t) \; d_t g &= \frac{1}{t^{\dim \kp - \dim \ka}}\int_K \int_{\ka^+} \int_K f\left(k\exp(tv)k', t\right)\\
&\qquad \qquad \qquad\cdot \prod_{\alpha \in \Delta(\kp, \ka)} \left(\sinh(\alpha(tv))\right)^{n_\alpha}\; dk dvdk'.
\end{aligned}
\]
\end{definition}

\begin{theorem}
\label{main thm}
Suppose that $f$ is a smooth  function on $\mathcal{G}$ with compact support and $x \in K$ is a regular element.  Then 
\[
\lim_{t \to 0} \int_{G_t} f(g \cdot x \cdot  g^{-1}, t) \; d_t g = \int_K  \int_\kp f\left(kx k^{-1}, w-\text{Ad}_{kx k^{-1}}   w , 0\right ) \; dk dw .
\]
\end{theorem}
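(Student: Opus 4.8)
The plan is to reduce the left-hand side to the explicit $KAK$–integral of Definition \ref{defn:haar-measure-t} and then pass to the limit $t\to 0$ under the integral sign. Writing $g=k_1\exp(ta)k_2$ with $k_1,k_2\in K$, $a\in\ka^+$, and setting $y=k_2xk_2^{-1}\in K$, we have
\[
\begin{aligned}
\int_{G_t} f(gxg^{-1},t)\,d_tg
&=\frac{1}{t^{\dim\kp-\dim\ka}}\int_K\int_{\ka^+}\int_K f\big(k_1\exp(ta)\,y\,\exp(-ta)\,k_1^{-1},t\big)\\
&\qquad\qquad\cdot\prod_{\alpha}\big(\sinh(\alpha(ta))\big)^{n_\alpha}\,dk_1\,da\,dk_2 .
\end{aligned}
\]
First I would analyze the integrand. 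Since $y\in K$ we have $\exp(ta)\,y\,\exp(-ta)=y\exp\!\big(t\Ad_{y^{-1}}a\big)\exp(-ta)$, and because $\Ad_{y^{-1}}a,\,a\in\kp$ while $[\kp,\kp]\subseteq\kk$, the Baker--Campbell--Hausdorff formula gives $\exp\!\big(t\Ad_{y^{-1}}a\big)\exp(-ta)=\exp\!\big(t(\Ad_{y^{-1}}a-a)+O(t^2)\big)$, the $O(t^2)$–correction to the $\kk$–component being of order $t^2$ and to the $\kp$–component of order $t^2$. Feeding this through the global Cartan decomposition $G=K\exp(\kp)$ shows that the $\varphi_t$–coordinates $(k'',w)$ of $gxg^{-1}$, determined by $gxg^{-1}=k''\exp(tw)$, satisfy
\[
k''\longrightarrow k_1yk_1^{-1},\qquad w\longrightarrow \Ad_{k_1}\big(\Ad_{y^{-1}}a-a\big)\qquad(t\to 0),
\]
for each fixed $(k_1,a,k_2)$. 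Hence $f(gxg^{-1},t)=F(k'',w,t)\to F\big(k_1yk_1^{-1},\Ad_{k_1}(\Ad_{y^{-1}}a-a),0\big)$ pointwise.

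The measure factor converges as well: since $\sum_{\alpha}n_\alpha=\dim\kp-\dim\ka$ and $\sinh(s)\sim s$,
\[
\frac{1}{t^{\dim\kp-\dim\ka}}\prod_{\alpha}\big(\sinh(\alpha(ta))\big)^{n_\alpha}\longrightarrow \prod_{\alpha}\big(\alpha(a)\big)^{n_\alpha}\qquad(t\to 0),
\]
and, using that $s\mapsto\sinh(s)/s$ is increasing, this quantity is bounded by $C\prod_{\alpha}(\alpha(a))^{n_\alpha}$ uniformly for small $t$ on any bounded set of $a$. To apply dominated convergence I must control the support, and this is the step where regularity of $x$ is essential; I expect it to be the main obstacle. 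Because $f$ has compact support on $\mathcal G$, the coordinate $w$ is confined to a fixed ball $\{\|w\|\le R\}$, and one has to deduce that the radial variable $a$ is then confined to a fixed compact set, uniformly in small $t$. Using $K$–bi–invariance of the Cartan projection $\mu$, the support condition translates into a bound $\|\mu(\exp(t\Ad_{y^{-1}}a)\exp(-ta))\|\le tR$; the crux is to establish a matching lower bound $\gtrsim t\|(\Ad_{y^{-1}}-\id)a\|\gtrsim t\|a\|$, where the last inequality uses precisely the regularity hypothesis $\det_\kp(\id-x)\ne 0$, i.e. that $\id-\Ad_x$ is invertible on $\kp$. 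At $t=0$ the limiting coordinate map $a\mapsto\Ad_{k_1}(\Ad_{y^{-1}}-\id)a$ is manifestly proper for the same reason, so the real content is to propagate this uniform properness to small $t>0$. (For singular $x$ the operator $\id-\Ad_x$ has a kernel in $\kp$, the corresponding directions of $a$ escape to infinity, and the limit integral diverges, consistent with Theorem \ref{thm:L2}.) Granting this, dominated convergence gives
\[
\lim_{t\to0}\int_{G_t}f(gxg^{-1},t)\,d_tg=\int_K\int_{\ka^+}\int_K f\big(k_1yk_1^{-1},\Ad_{k_1}(\Ad_{y^{-1}}-\id)a,0\big)\prod_{\alpha}(\alpha(a))^{n_\alpha}\,dk_1\,da\,dk_2 .
\]

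It remains to identify this with the right-hand side. Substituting $\tilde k=k_1k_2$ (so that $k_1yk_1^{-1}=\tilde kx\tilde k^{-1}$ and, by invariance of Haar measure, $dk_1=d\tilde k$) rewrites the second slot as $\Ad_{\tilde k}(\Ad_{x^{-1}}-\id)\Ad_{k_2^{-1}}a$. The remaining $(k_2,a)$–integral against $\prod_{\alpha}(\alpha(a))^{n_\alpha}$ is exactly the polar–coordinate presentation of the Haar measure on $\kp$: with $u=\Ad_{k_2^{-1}}a$,
\[
\int_K\int_{\ka^+}\Phi(\Ad_{k_2^{-1}}a)\prod_{\alpha}(\alpha(a))^{n_\alpha}\,dk_2\,da=\int_\kp\Phi(u)\,du ,
\]
the normalization constant being $1$ since this identity is precisely the $t\to0$ degeneration of Definition \ref{defn:haar-measure-t}, whose limit must be the chosen Haar measure $dk\,dw$ of $G_0=K\ltimes\kp$. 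Finally, two orthogonal changes of variable of unit Jacobian (first $u\mapsto\Ad_{\tilde k}u$, then $\Ad_{\tilde kx\tilde k^{-1}}w$ for the resulting variable) reduce $\Ad_{\tilde k}(\Ad_{x^{-1}}-\id)u$ to $w-\Ad_{\tilde kx\tilde k^{-1}}w$ and produce
\[
\int_K\int_\kp f\big(\tilde kx\tilde k^{-1},\,w-\Ad_{\tilde kx\tilde k^{-1}}w,\,0\big)\,d\tilde k\,dw ,
\]
which is the asserted limit. The only genuinely delicate point is the uniform compactness of the support in $a$, which is exactly where the regularity of $x$ enters.
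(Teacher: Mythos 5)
Your proposal is correct and follows essentially the same route as the paper's own proof: rewrite the orbital integral via the $KAK$-form of the Haar measure $d_tg$ from Definition \ref{defn:haar-measure-t}, identify the pointwise limit of the conjugated group element in the coordinates of $\mathcal{G}$, pass to the limit under the integral sign, and absorb the density $\prod_\alpha(\alpha(a))^{n_\alpha}$ as the Jacobian of the polar map $K\times\ka^+\to\kp$. The one step you leave as a promissory note --- the uniform-in-$t$ confinement of the radial variable $a$ to a compact set, which is where the regularity of $x$ enters --- is precisely the step the paper also treats summarily (``by the compact support assumption on $f$, we can commute the integral with the limit''), so there is no substantive divergence; if anything, your bookkeeping of the BCH correction and of the unimodular changes of variable needed to bring the second slot into the form $w-\Ad_{kxk^{-1}}w$ is more explicit than the paper's.
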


\begin{proof}
By Definition \ref{defn:haar-measure-t} of Haar measure on $G_t$, we have that 
\[
\begin{aligned}
&\int_{G_t} f(g \cdot x \cdot  g^{-1}, t) \; d_t g \\
=& \int_K \int_{\ka^+} \int_K 
f\left(k'\exp(tv)kx k^{-1} \exp(-tv)k'^{-1}, t\right ) \\
&\qquad \qquad \qquad \cdot \prod_{\alpha \in \Delta(\kp, \ka)} \left(\frac{\sinh(\alpha(tv))}{t}\right)^{n_\alpha}\; dk dvdk'\\
=& \int_K \int_{\ka^+} \int_K 
f\left(\exp(t \Ad_{k'}v)k'kx k^{-1} \exp(-tv)k'^{-1}, t\right ) \\
&\qquad \qquad \qquad \cdot \prod_{\alpha \in \Delta(\kp, \ka)} \left(\frac{\sinh(\alpha(tv))}{t}\right)^{n_\alpha}\; dk dvdk'\\
=& \int_K \int_{\ka^+} \int_K 
f\left(\exp(t \Ad_{k'}v)\exp(-t\Ad_{k'kx k^{-1} k'^{-1}}\circ Ad_{k'}v)k'kx k^{-1} k'^{-1}, t\right ) \\
&\qquad \qquad \qquad \cdot  \prod_{\alpha \in \Delta(\kp, \ka)} \left(\frac{\sinh(\alpha(tv))}{t}\right)^{n_\alpha}\; dk dvdk'\\
=& \int_K \int_{\ka^+} \int_K 
f\left(\exp(t \Ad_{k'}v)\exp(-t\Ad_{kx k^{-1}}\circ Ad_{k'}v)kx k^{-1} , t\right ) \\
&\qquad \qquad \qquad \cdot \prod_{\alpha \in \Delta(\kp, \ka)} \left(\frac{\sinh(\alpha(tv))}{t}\right)^{n_\alpha}\; dk dvdk',
\end{aligned}
\]	
where in the last equation we have changed $k'k$ to $k$. Let $w = \Ad_{k'}v \in \kp$. Then 

\[
\begin{aligned}
&\lim_{t \to 0} f\left(\exp(t \Ad_{k'}v)\exp(-t\Ad_{kx k^{-1}}\circ Ad_{k'}v)kx k^{-1} , t\right ) \\
=&f\left(kx k^{-1}, w-\Ad_{kx k^{-1}}w , 0\right ) .
\end{aligned}
\]
Moreover, we have 
\[
\lim_{t \to 0} \frac{\sinh(\alpha(tv))}{t} = \alpha(v). 
\]
Hence, by the compact support assumption on $f$, we can commute the integral with the limit with respect to $t$ and compute
\[
\begin{aligned}
&\lim_{t \to 0}\int_{G_t} f(g \cdot x \cdot  g^{-1}, t) \; d_t g \\
=&	 \int_K \int_{\ka^+} \int_K f\left(kx k^{-1}, \Ad_{k'}v-\Ad_{kx k^{-1}} \circ \Ad_{k'}v , 0\right ) \cdot 
\prod_{\alpha \in \Delta(\kp, \ka)} \left(\alpha(v)\right)^{n_\alpha} dk dv dk'\\
=& \int_K  \int_\kp f\left(kx k^{-1}, w-\Ad_{kx k^{-1}} w , 0\right ) \; dw dk,
\end{aligned}
\]
where the last equation follows from the fact that the term 
\[
\prod_{\alpha \in \Delta(\kp, \ka)} \left(\alpha(v)\right)^{n_\alpha}
\]
is the Jacobian of the map $\ka^+ \times K \to \kp$.  
\end{proof}

As we assume in Section \ref{sec:semiproduct} that the $K$ action on $\mathfrak{p}$ is spin, $G_t/K$ is equipped with a $G_t$ invariant spin structure. Let $D_{G_t/K}$ be the associated Dirac operator on the homogeneous space $G_t/K \cong G/K$ for $t \geq 0$. Let $\kappa_t^s$ be the smoothing kernel of the heat operator $e^{-sD^2_{G_t/K}}$. It is known \cite{HC75} that $\kappa_t^s$ gives a Harish-Chandra Schwartz function on $G_t$.  

\begin{lemma}
We have the following
\[
\lim_{t \to 0} \kappa_t^s(k\exp(tv)) = \kappa_0^s(k, v). 
\]	
\end{lemma}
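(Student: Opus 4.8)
The plan is to prove this convergence of heat kernels by reducing it to the general convergence principle for the deformation groupoid, in the same spirit as the proof of Theorem \ref{main thm}. The key observation is that the heat kernel $\kappa_t^s$ on $G_t/K$ is built from the Dirac operator $D_{G_t/K}$, and under the identification $G_t/K \cong G/K$ (valid for all $t \geq 0$, including the Cartan motion group case where $G_0/K = \kp$), the family $\{D_{G_t/K}\}$ forms a smooth family of operators degenerating as $t \to 0$ to the Dirac operator on the flat space $\kp = G_0/K$. First I would write the heat semigroup $e^{-sD^2_{G_t/K}}$ and note that its Schwartz kernel depends smoothly on the parameter $t$ including the endpoint $t=0$; this smooth dependence is exactly what underlies the deformation groupoid picture and is the content of the cited results \cite{HC75, higson-mackey}. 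The function $\kappa_t^s$, viewed through the diffeomorphism $(k,v,t) \mapsto (k\exp(tv), t)$, is a function $F(k,v,t)$ on $K \times \kp \times [0,1]$, and the claim is precisely that $F(k,v,t)$ is continuous up to $t=0$ with $F(k,v,0) = \kappa_0^s(k,v)$.

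The main structural step is to identify the limiting operator. I would show that as $t \to 0$, the scaled Dirac operator $D_{G_t/K}$ converges, in a suitable sense on the trivialized bundle $S_\kp$ over $\kp$, to the Dirac operator $D_{G_0/K}$ for the Cartan motion group, whose associated heat kernel is by definition $\kappa_0^s$. Concretely, writing the Dirac operator in the coordinates provided by $\varphi_t$, the restricted root terms $\sinh(\alpha(tv))$ appearing in the volume form (compare Definition \ref{defn:haar-measure-t}) and the Christoffel symbols of the $G_t$-invariant metric all have well-defined limits as $t \to 0$: the curvature contributions scale away and one is left with the flat Laplacian on $\kp$ twisted by the $K$-action, which is exactly the generator of the heat flow on the Cartan motion group $K \ltimes \kp$. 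This is the computation that mirrors the limits $\lim_{t\to 0}\frac{\sinh(\alpha(tv))}{t} = \alpha(v)$ used in Theorem \ref{main thm}.

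Once the convergence of the generators is established, the convergence of the heat kernels follows from continuity of the functional calculus. Since $D_{G_t/K}$ is a smooth family of essentially self-adjoint elliptic operators with the stated smooth dependence on $t$, the associated heat kernels $\kappa_t^s$ depend continuously on $t$ in the topology of uniform convergence on compact sets (indeed in the Harish-Chandra Schwartz topology, by \cite{HC75}), and evaluation at the point corresponding to $(k,v)$ gives the pointwise limit $\kappa_0^s(k,v)$. I would invoke standard parabolic regularity or Duhamel's formula to upgrade the convergence of generators to convergence of heat kernels, controlling the error by the difference of the operators applied to the kernel.

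The hard part will be making rigorous the convergence of the Dirac operators and their heat kernels uniformly enough to pass to the limit, since this is an analytic statement about a degenerating family of geometries rather than a formal groupoid computation; in particular one must control the heat kernels uniformly in $t$ near $t=0$, where the injectivity radius and curvature bounds of $G_t/K$ may behave delicately under the rescaling. I expect that the cleanest route is to cite the established smoothness of the heat kernel of the deformation groupoid's Dirac operator as a single operator on $\mathcal{G}$, so that the lemma becomes a matter of restricting this global object to the fibers $G_t$ and reading off the $t=0$ value, rather than re-deriving the analytic convergence from scratch.
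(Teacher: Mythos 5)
Your proposal is correct and takes essentially the same route as the paper: the paper's proof is a two-line citation of \cite{BGV}*{Theorem 2.48} (smoothness of the heat kernel with respect to a parameter indexing a smooth family of Riemannian metrics) together with \cite{GR} for the extension of those estimates to the noncompact spaces $G_t/K$, which is precisely the ``cite the established smooth dependence of the family'' option you settle on at the end. The analytic delicacies you flag are milder than you fear, since under the identification $G_t/K \cong \kp$ the metrics form a smooth family of complete, simply connected, nonpositively curved metrics degenerating to the flat metric at $t=0$, so there is no collapse of injectivity radius and the curvature bounds are uniform.
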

\begin{proof} 
It is proved in \cite[Theorem 2.48]{BGV} that the heat kernel on a compact manifold is smooth with respect to the parameter $t$, indexing a smooth family of Riemannian metrics. The same proof extends to $G_t/K$ with analogous estimates.  See \cite[Lemma 2.2, Theorem 2.2]{GR}.
\end{proof}

 We define a function $\Phi\colon \mathcal{G} \to C$ by 
\[ 
\begin{cases}
\Phi(k, v, 0) = s^{-\dim G/2} \cdot \exp\left(\frac{-|v|^2}{16s}\right)
,\\
 &\\
\Phi(k\exp(tv), t) = s^{-\dim G/2} \cdot \exp\left(\frac{-|v|^2}{16s}\right)
, & t > 0. 
 \end{cases}
\]
By the estimate in \cite{Li-Yau}, there exists a positive constant $C$ such that, for all $t\in [0, 1]$, 
\begin{equation}
\|\kappa_t^s(g)\|\leq C \cdot \Phi(g), \quad g \in G_t. 
\end{equation}
By the same computation in Theorem \ref{main thm}, as $\Phi(g)$ is independent of $t$, the integral
\[
\int_{G_t} f(g \cdot x \cdot  g^{-1}, t) \; d_t g 
\]
is uniformly bounded for all $t \in [0, 1]$. Hence, the following theorem 
follows from the dominated convergence theorem:
\begin{theorem}
\label{contin thm}
For any $s > 0$, 
\[
\lim_{t \to 0} \int_{G_t} \kappa^s_t(g \cdot x \cdot  g^{-1}, t) \; d_t g = \int_K  \int_\kp \kappa^s_0\left(kx k^{-1}, w-\text{Ad}_{kx k^{-1}}   w , 0\right ) \; dk dw .
\]	
\end{theorem}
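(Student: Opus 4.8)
The plan is to deduce Theorem \ref{contin thm} from Theorem \ref{main thm} together with the heat-kernel convergence lemma by applying dominated convergence. The only obstacle is that Theorem \ref{main thm} was stated for a \emph{compactly supported} smooth function $f$ on $\mathcal{G}$, whereas $\kappa^s_t$ is only a Harish-Chandra Schwartz function, so I cannot cite the theorem verbatim. However, the computation in the proof of Theorem \ref{main thm} uses compact support only at one point: to justify interchanging the limit $t\to 0$ with the triple integral over $K \times \ka^+ \times K$. So the strategy is to replace that appeal to compact support with an appeal to the dominated convergence theorem, supplying an explicit $t$-independent integrable majorant.

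First I would write out, exactly as in the proof of Theorem \ref{main thm}, the change of variables that brings the integral into the form
\[
\int_K \int_{\ka^+} \int_K
\kappa^s_t\!\left(\exp(t \Ad_{k'}v)\exp(-t\Ad_{kx k^{-1}}\!\circ \Ad_{k'}v)\,kx k^{-1}, t\right )
\cdot \prod_{\alpha \in \Delta(\kp, \ka)} \left(\frac{\sinh(\alpha(tv))}{t}\right)^{n_\alpha} dk\,dv\,dk'.
\]
The pointwise limit of the integrand as $t\to 0$ is computed exactly as before: the Lemma preceding this theorem gives $\lim_{t\to 0}\kappa^s_t(k\exp(tv)) = \kappa^s_0(k,v)$, so the group-element factor converges to $\kappa^s_0\!\left(kx k^{-1}, w-\Ad_{kxk^{-1}}w,0\right)$ with $w=\Ad_{k'}v$, while $\sinh(\alpha(tv))/t \to \alpha(v)$. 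Thus the pointwise limit of the integrand is identical to that in Theorem \ref{main thm}, and after integrating out $k'$ it reduces to the claimed right-hand side via the same Jacobian identity $\prod_\alpha(\alpha(v))^{n_\alpha}$ for the map $\ka^+\times K \to \kp$.

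The key remaining step, which is the heart of the argument and is already flagged in the paragraph preceding the theorem, is to produce the dominating function. Here I would invoke the Li--Yau estimate $\|\kappa^s_t(g)\| \le C\cdot \Phi(g)$ with $\Phi$ the Gaussian defined above, which is crucially \emph{independent of} $t$. Substituting this bound into the integrand replaces $\kappa^s_t$ by $C\cdot\Phi$, and since $\Phi$ is a genuine compactly-controlled (Gaussian, hence integrable) function with no $t$-dependence, the resulting majorant
\[
C\cdot \Phi\!\left(\exp(t \Ad_{k'}v)\exp(-t\Ad_{kx k^{-1}}\!\circ \Ad_{k'}v)\,kx k^{-1}\right)\cdot \prod_{\alpha \in \Delta(\kp, \ka)} \left(\frac{\sinh(\alpha(tv))}{t}\right)^{n_\alpha}
\]
is uniformly integrable in $t\in[0,1]$ by the same computation as in Theorem \ref{main thm}. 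I would note that $|\sinh(\alpha(tv))/t|\le e^{\alpha(v)}$-type bounds keep the Jacobian factor controlled, and the Gaussian decay of $\Phi$ in $v$ dominates this polynomial-times-exponential growth on $\ka^+$, so the product is integrable over $K\times\ka^+\times K$ uniformly in $t$.

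With a $t$-independent integrable majorant in hand, the dominated convergence theorem permits interchanging $\lim_{t\to 0}$ with the integral. The pointwise limit computation from the second paragraph then yields the stated equality, completing the proof. The main obstacle, as noted, is not the limit computation — which is a direct transcription of Theorem \ref{main thm} — but verifying that the Li--Yau Gaussian bound genuinely dominates the integrand uniformly on the non-compact domain $\ka^+$; this is exactly the point at which compact support was used in Theorem \ref{main thm} and must now be replaced by the Gaussian decay estimate.
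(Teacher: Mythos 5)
Your proposal is correct and follows essentially the same route as the paper: the paper likewise defines the $t$-independent Gaussian majorant $\Phi$, invokes the Li--Yau estimate $\|\kappa^s_t(g)\|\le C\,\Phi(g)$, reuses the change-of-variables computation from Theorem \ref{main thm} together with the heat-kernel convergence lemma for the pointwise limit, and concludes by dominated convergence. Your write-up is in fact slightly more explicit than the paper's about why the Gaussian decay dominates the $\sinh$-Jacobian factor uniformly in $t$, which the paper leaves implicit.
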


\section{$K$-theory Paring}\label{sec:pairing}
We have defined the deformation groupoid $\mathcal{G}$, a family of Lie groups $\{G_t\}_{0\leq t \leq 1}$, in last section. We can consider the reduced $C^*$-algebra of each $G_t$ for the corresponding Haar measure. The field
\[
\left\{ C^*_r(G_t) \right\}_{0 \leq t \leq 1}
\]
is then a continuous field of $C^*$-algebra \cite[Section 6.2]{higson-mackey}. 

Let $E$ be an irreducible $K$-representation and $S_\kp = S_\kp^+ \oplus S_\kp^-$ be the $\Z_2$-graded spin module in Section \ref{sec:semiproduct}. We consider a family of homogeneous spaces $G_t/K$ and a family of Dirac operators $\{D^E_t\}_{0 \leq t \leq 1}$ acting on 
\[
\left[L^2(G_t) \otimes S_\kp \otimes E\right]^K. 
\]
We denote by $D^E_{t,\pm}$ its restriction to 
\[
\left[L^2(G_t) \otimes S^\pm_\kp \otimes E\right]^K. 
\]
We consider the following Connes-Moscovici projection \cite{connes-moscovici}
\[
P^E_t = \begin{pmatrix}
	e^{-D^{E}_{t,-}D^{E}_{t,+}} & e^{\frac{-D^{E}_{t,-}D^{E}_{t,+}}{2}} \cdot \frac{(1 - e^{-D^{E}_{t,-}D^{E}_{t,+}})}{D^{E}_{t,-}D^{E}_{t,+}} \cdot D^{E}_{t,-}\\
	e^{-\frac{D^{E}_{t,+}D^{E}_{t,-}}{2}} \cdot D^{E}_{t,+} & 1 - e^{-D^{E}_{t,+}D^{E}_{t,-}} 
\end{pmatrix}
\]
which is an idempotent and 
\[
[P^E_t] - \begin{pmatrix}
	0& 0\\
	0& 1
\end{pmatrix}
\]
defines a generator in $K\left(C_r^*(G_t) \right)$. The following is the well-known Connes-Kasparov conjecture proved by Lafforgue \cite{lafforgue}. 
\begin{theorem}[Connes-Kasparov]\label{thm:CK}
If $G$ is spin, then the map 
\[
R(K)\to K^*(C^*_rG) \colon [E] \to [P^E_t] - \begin{pmatrix} 
	0& 0\\
	0& 1
\end{pmatrix} 
\]	
is an isomorphism, where $R(K)$ denotes the character ring of $K$.  
\end{theorem}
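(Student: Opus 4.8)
The plan is to use the deformation groupoid $\mathcal{G}$ to transport the statement to the Cartan motion group $G_0 = K\ltimes\kp$, where the Fourier isomorphism of the earlier theorem reduces everything to an equivariant Thom isomorphism. First I would assemble the Connes--Moscovici idempotents $P^E_t$ into a single idempotent $P^E$ over the $C^*$-algebra $\kA$ of continuous sections of the field $\{C^*_r(G_t)\}_{0\le t\le 1}$. Continuity of this section is exactly the content of the heat-kernel continuity established around Theorem \ref{contin thm}: since $P^E_t$ is built from the bounded functional calculus of $D^E_t$ and the kernels $\kappa^s_t$ vary continuously in $t$ (including at $t=0$), the matrix entries of $P^E_t$ form continuous sections, so $[P^E]-\big(\begin{smallmatrix}0&0\\0&1\end{smallmatrix}\big)$ is a well-defined class in $K^*(\kA)$, and the evaluation homomorphism $\mathrm{ev}_t\colon \kA\to C^*_r(G_t)$ sends it to $[P^E_t]-\big(\begin{smallmatrix}0&0\\0&1\end{smallmatrix}\big)$.

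The key structural input, due to Higson's Mackey-analogy analysis and Afgoustidis' argument (\cite{higson-mackey, afgoustidis}), is that every evaluation map $\mathrm{ev}_t$ induces an isomorphism on $K$-theory. Granting this, both $\mathrm{ev}_1$ and $\mathrm{ev}_0$ are isomorphisms and carry the single class above to the classes at $t=1$ and $t=0$ respectively; consequently the map $[E]\mapsto[P^E_1]-\big(\begin{smallmatrix}0&0\\0&1\end{smallmatrix}\big)$ is an isomorphism if and only if its counterpart at $t=0$ is. Thus it suffices to prove that $[E]\mapsto[P^E_0]-\big(\begin{smallmatrix}0&0\\0&1\end{smallmatrix}\big)$ is an isomorphism $R(K)\xrightarrow{\ \cong\ }K^*\!\big(C^*_r(G_0)\big)$.

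At $t=0$ I would invoke the Fourier isomorphism $C^*_r(K\ltimes\kp)\cong C_0\big(\widehat{\kp},\bK(L^2(K))\big)^K$. Since $\bK(L^2(K))$ carries the (regular) $K$-action, $K$-equivariant stability identifies $K^*\big(C_0(\widehat{\kp},\bK(L^2(K)))^K\big)$ with the $K$-equivariant $K$-theory with compact supports of the representation space $\widehat{\kp}\cong\kp$. Because the $K$-action on $\kp$ is spin and $\kp$ is even dimensional, the equivariant Thom isomorphism applies and gives $R(K)=K^*_K(\mathrm{pt})\xrightarrow{\cong}K^*_K(\kp)$, implemented by Clifford multiplication on $S_\kp=S_\kp^+\oplus S_\kp^-$, i.e. by the Bott/Dirac class. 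A direct unwinding---using the model in Proposition \ref{prop tau} for the inverse Fourier transform of $g\cdot\id_E$ together with the explicit form of $P^E_0$---identifies the map $[E]\mapsto[P^E_0]-\big(\begin{smallmatrix}0&0\\0&1\end{smallmatrix}\big)$ with precisely this equivariant Thom homomorphism (Dirac induction for the spin representation $\kp$), so it is bijective.

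The genuinely hard step is the $K$-theory isomorphism for the evaluation maps of the continuous field, and in particular that $\mathrm{ev}_0$ is an isomorphism: this is the analytic heart of Connes--Kasparov and is \emph{not} formal homotopy invariance, because the field is not a constant field over $[0,1]$ as $t\to 0$. Establishing it requires either Lafforgue's Banach $KK$-theoretic techniques \cite{lafforgue} or the Mackey-analogy/Vogan route of \cite{higson-mackey, afgoustidis}. Everything downstream of this reduction---the stability identification and the Thom isomorphism at $t=0$---is comparatively routine equivariant index theory.
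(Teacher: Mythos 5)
The paper does not prove this theorem at all: it is quoted as a known result, with the proof attributed to Lafforgue \cite{lafforgue}, and Remark \ref{rmk:CK} surveys the alternative proofs (Wassermann, Baum--Connes--Higson, Higson's Mackey analogy, Afgoustidis, Clare--Higson--Song--Tang). Your sketch follows the deformation-groupoid route of that remark, and its skeleton is the standard one: assemble the $P^E_t$ into a class over the section algebra $\mathcal{C}$ of the field $\{C^*_r(G_t)\}$, reduce to the fiber at $t=0$, and compute $K_*\big(C^*_r(K\ltimes\kp)\big)\cong R(K)$ via the Fourier isomorphism and the equivariant Bott/Thom isomorphism for the spin, even-dimensional $K$-module $\kp$. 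That part is correct and matches what the remark asserts is ``easy to show''.

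Two problems remain. First, you have the roles of the two evaluation maps reversed. The map $\mathrm{ev}_0\colon\mathcal{C}\to C^*_r(G_0)$ is the \emph{formal} one: its kernel consists of the sections over $(0,1]$, where the field is trivialized by the isomorphisms $\varphi_t\colon K\times\kp\to G$, hence is isomorphic to the cone $C_0\big((0,1]\big)\otimes C^*_r(G)$, whose $K$-theory vanishes; so $(\mathrm{ev}_0)_*$ is always an isomorphism. The analytic heart of Connes--Kasparov is that $(\mathrm{ev}_1)_*$ is an isomorphism --- the field over $[0,1)$ contains the degenerate fiber and is not contractible. Second, and consequently, the step you ``grant'' (that every $\mathrm{ev}_t$ induces a $K$-theory isomorphism) is not an auxiliary input but is, modulo the $t=0$ computation, the theorem itself: the composite $(\mathrm{ev}_1)_*\circ(\mathrm{ev}_0)_*^{-1}$ is exactly the Connes--Kasparov map of the remark. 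So your proposal is a correct reduction of the statement to its standard equivalent form, together with a correct computation at $t=0$, but it is not a proof; the essential content must still be supplied by Lafforgue's Banach $KK$-theoretic argument or by the Mackey-analogy/Vogan matching of \cite{higson-mackey, afgoustidis}, which is precisely what the paper's bare citation acknowledges.
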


\begin{remark}\label{rmk:CK}
One can find different approaches to the Connes-Kasparov conjecture in \cite{afgoustidis, BCH, CHST, higson-mackey, wassermann }. Let us point out the Afgoustidis' proof is based on the study of Cartan motion group suggested in \cite{BCH, higson-mackey}. To be more precise,  we consider 
\[
\mathcal{C} \colon = C^*\text{-algebra of continuous sections of $C^*_r(G_t)$}.
\]
The evaluation maps at $t = 0$ and $t = 1$ induce $C^*$-algebra morphism from $\mathcal{C}$ to $C^*_r(G_0)$ and $C^*_r(G)$, respectively, and in turn induce two isomorphisms
\[
\alpha_0 \colon K(\mathcal{C}) \to K(C^*_r(G_0)), \quad \alpha_1 \colon K(\mathcal{C}) \to K(C^*_r(G)). 
\] 	
On the other hand, it is easy to show that $ K(C^*_r(G_0)) \cong R(K)$. Chabert-Echterhoff-Nest \cite{nest} generalized the above Theorem \ref{thm:CK} to almost connected Lie groups. 
\end{remark}

For any regular element $x \in T$, the orbital integral $\tau_x$ is a trace on $\mathcal{C}(G_t)$, 
\[
\tau_x(f \star g) = \tau_x (g \star f),
\]
where $\star$ denotes the convolution product on $\mathcal{C}(G)$. Thus, it induces a continuous (by Theorem \ref{main thm}) family of traces
\[
\tau_x \colon K(C^*_r(G_t)) \to \mathbb{C}. 
\]

\begin{theorem}
Let $E$ be an irreducible $K$-representation with highest weight $\mu$. For all $t >0$, we have that 
\[
\tau_x \left([P^E_t]\right)  = (-1)^{\frac{\dim \kp}{2}} \cdot \frac{\sum_{w \in W_K} (-1)^w \cdot e^{w(\mu + \rho_K)}(x)}{\prod_{\alpha \in \Delta^+(\kg, \kt)}\left( e^{\frac{\alpha}{2}} - e^{\frac{-\alpha}{2}} \right)(x)},
\]	
which is the character for discrete series representation of $G$ with Harish-Chandra parameter $\mu$. 
\end{theorem}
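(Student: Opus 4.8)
The plan is to exploit the deformation: since the index pairing is constant along $\mathcal{G}$, I would compute it at $t=0$, where the Fourier model of Section~\ref{sec:semiproduct} together with Proposition~\ref{prop tau} trivializes the orbital integral, and then translate the answer into Harish-Chandra's character formula.

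First I would reduce to $t=0$. By the McKean--Singer principle the idempotent pairs with the trace as the $s$-independent delocalized supertrace
\[
\tau_x\big([P^E_t]\big)=\tau_x\!\left(e^{-s D^E_{t,-}D^E_{t,+}}\right)-\tau_x\!\left(e^{-s D^E_{t,+}D^E_{t,-}}\right),
\]
which is exactly the orbital integral of the (super)heat kernel $\kappa^s_t$ studied in Theorem~\ref{contin thm}. For $t>0$ the isomorphism $\varphi_t\colon G_t\to G$ only rescales the metric of $G_t/K\cong G/K$, so the class $[P^E_t]$, and hence the pairing, does not depend on $t$; Theorem~\ref{contin thm} shows in addition that this quantity is continuous as $t\to0$ (equivalently one may invoke the independence statement of Theorem~\ref{t0 thm}). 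Hence $\tau_x([P^E_t])=\tau_x([P^E_0])$ for every $t>0$, and it suffices to evaluate the pairing on the Cartan motion group $G_0=K\ltimes\kp$.

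At $t=0$ one has $G_0/K=\kp$, and $D^E_0$ is the flat $K$-equivariant Dirac operator on $\kp$ twisted by $E$, acting on $[L^2(G_0)\otimes S_\kp\otimes E]^K$. Since $\kp$ is Euclidean, $D^E_{0,-}D^E_{0,+}$ and $D^E_{0,+}D^E_{0,-}$ are the scalar Laplacian, so under the Fourier isomorphism $C^*_r(K\ltimes\kp)\cong C_0(\widehat{\kp},\bK(L^2(K)))^K$ the two heat operators become the inverse Fourier transforms of $g(z)\cdot\id_{S_\kp^{\pm}\otimes E}$ with the $K$-invariant Gaussian $g(z)=e^{-s\abs{z}^2}$, for which $g(0)=1$. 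Applying Proposition~\ref{prop tau} to $S_\kp^+\otimes E$ and to $S_\kp^-\otimes E$ and subtracting, the numerator becomes $\chi_{S_\kp^+\otimes E}(x)-\chi_{S_\kp^-\otimes E}(x)=\big(\chi_{S_\kp^+}(x)-\chi_{S_\kp^-}(x)\big)\chi_E(x)$, whence
\[
\tau_x\big([P^E_0]\big)=(-1)^{\frac{\dim\kp}{2}}\frac{\big(\chi_{S_\kp^+}(x)-\chi_{S_\kp^-}(x)\big)\chi_E(x)}{\big(\chi_{S_\kp^+}(x)-\chi_{S_\kp^-}(x)\big)^2}=(-1)^{\frac{\dim\kp}{2}}\frac{\chi_E(x)}{\chi_{S_\kp^+}(x)-\chi_{S_\kp^-}(x)}.
\]

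Finally I would rewrite this through the Weyl character formula. Using $\Delta^+(\kg,\kt)=\Delta^+(\kk,\kt)\sqcup\Delta^+(\kp,\kt)$, the spin character equals the $\kp$-Weyl denominator, $\chi_{S_\kp^+}(x)-\chi_{S_\kp^-}(x)=\prod_{\alpha\in\Delta^+(\kp,\kt)}\big(e^{\alpha/2}-e^{-\alpha/2}\big)(x)$, while Weyl's formula gives $\chi_E(x)=\sum_{w\in W_K}(-1)^w e^{w(\mu+\rho_K)}(x)/\prod_{\alpha\in\Delta^+(\kk,\kt)}\big(e^{\alpha/2}-e^{-\alpha/2}\big)(x)$. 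The two products combine into the full Weyl denominator over $\Delta^+(\kg,\kt)$, giving exactly the asserted expression; comparison with Harish-Chandra's formula (with the sign $(-1)^{\dim\kp/2}=(-1)^{\dim(G/K)/2}$) identifies it as the restriction to the regular element $x$ of the global character of the discrete series of $G$ attached to $E$ with Harish-Chandra parameter $\mu+\rho_K$. The main obstacle is the middle step: one must pin down the image of the class $[P^E_0]$ under the Fourier transform precisely enough to feed it into Proposition~\ref{prop tau}, i.e. verify that after the supertrace the Connes--Moscovici representative for the flat twisted Dirac operator is the inverse transform of $g\cdot\id_{S_\kp^{\pm}\otimes E}$ with $g(0)=1$ and the correct $\Z_2$-grading; once this normalization and the deformation-invariance are secured, the remaining manipulations are purely formal.
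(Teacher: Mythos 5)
Your proposal inverts the paper's logic: the paper proves this statement for $t>0$ by directly citing the orbital-integral index theorem of Hochs--Wang \cite[Theorem 3.1]{HW}, and only afterwards deduces the $t=0$ value and the $t$-independence (Theorem \ref{t0 thm}, whose Proof II is essentially your Fourier-transform computation at $t=0$). Your $t=0$ computation and the Weyl-character bookkeeping are fine and agree with that Proof II. The problem is the step you use to transport the $t=0$ answer to $t>0$.

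The gap is the claimed $t$-independence for $t>0$. You argue that $\varphi_t$ only rescales the metric on $G/K$, so the class $[P^E_t]$, ``and hence the pairing,'' does not depend on $t$. This conflates invariance of the $K$-theory class with invariance of the pairing: the trace $\tau_x$ on $C^*_r(G_t)$ is built from the rescaled Haar measure $d_tg$ of Definition \ref{defn:haar-measure-t}, which under $\varphi_t$ corresponds to $t^{-\dim\kp}\,dg$, so the identification $C^*_r(G_t)\cong C^*_r(G)$ does not carry $\tau_x^{(t)}$ to $\tau_x$ on the nose, and the pairing can genuinely depend on $t$. Indeed, your argument applies verbatim to $x=e$ and would yield $t$-independence of $\tau_e([P^E_t])$, contradicting Theorem \ref{thm:L2} of the paper, which gives $\tau_e([P^E_t])=t^{\dim\kp}\,\tau_e([P^E])$. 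What rescues the regular case is that for regular $x$ the orbital integral scales linearly in the Haar measure while the heat kernel, as a convolution kernel, scales inversely, so their pairing is normalization-independent; $\tau_e(f)=f(e)$ does not scale at all, which is why it behaves differently. This is precisely where regularity of $x$ enters, and it must be argued rather than asserted. Your fallback of ``invoking the independence statement of Theorem \ref{t0 thm}'' is circular, since that independence is itself deduced from the $t>0$ formula you are trying to prove. Either make the scaling analysis of $\tau_x$ under $\varphi_t$ explicit, or do as the paper does and quote Hochs--Wang for $t>0$.
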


\begin{proof}
See \cite[Theorem 3.1]{HW}. 	
\end{proof}

\begin{theorem}
\label{t0 thm}
When $t = 0$, we have that 
\[
\tau_x \left([P^E_0]\right)  = (-1)^{\frac{\dim \kp}{2}} \cdot \frac{\sum_{w \in W_K} (-1)^w \cdot e^{w(\mu + \rho_K)}(x)}{\prod_{\alpha \in \Delta^+(\kg, \kt)}\left( e^{\frac{\alpha}{2}} - e^{\frac{-\alpha}{2}} \right)(x)}.
\]	
In particular, the index paring 
\[
\tau_x \left([P^E_t]\right)  \in \mathbb{C} .
\]
is independent of $t$. 
\end{theorem}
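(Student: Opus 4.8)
The plan is to compute the pairing directly at $t=0$ by passing to the Fourier picture of Section~\ref{sec:semiproduct}, where $C^*_r(G_0)=C^*_r(K\ltimes\kp)$ is identified with $C_0(\widehat{\kp},\bK(L^2(K)))^K$. Under this identification the flat Dirac operator on $G_0/K\cong\kp$, twisted by $E$, becomes fiberwise Clifford multiplication: over each $z\in\widehat{\kp}$ one has $D^E_0(z)=i\,c(z)\otimes\id_E$ acting on $S_\kp\otimes E$, so that $D^E_{0,-}D^E_{0,+}=|z|^2\,\id_{S^+_\kp\otimes E}$ and $D^E_{0,+}D^E_{0,-}=|z|^2\,\id_{S^-_\kp\otimes E}$. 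The first step is therefore to record that at $t=0$ the Connes--Moscovici idempotent is an explicit matrix-valued function of $z$ whose diagonal blocks are $e^{-|z|^2}\id_{S^+_\kp\otimes E}$ and $1-e^{-|z|^2}\id_{S^-_\kp\otimes E}$, and whose off-diagonal entries are Clifford terms damped by Gaussians, hence genuine elements of the $C^*$-algebra.

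Next I would apply the McKean--Singer principle. Since $\tau_x$ extends to $M_2$ as $\tau_x\otimes\Tr$, only the diagonal blocks contribute, giving
\[
\tau_x\left([P^E_0]\right)=\tau_x\!\left(e^{-D^E_{0,-}D^E_{0,+}}\right)-\tau_x\!\left(e^{-D^E_{0,+}D^E_{0,-}}\right).
\]
Each term is the orbital integral of the inverse Fourier transform of $e^{-|z|^2}\cdot\id_{S^\pm_\kp\otimes E}$, with $g(z)=e^{-|z|^2}$ a $K$-invariant Schwartz function satisfying $g(0)=1$. Applying Proposition~\ref{prop tau} with the $K$-representation $E$ replaced by $S^\pm_\kp\otimes E$ (interpreting $\id_{S^\pm_\kp\otimes E}$ via Peter--Weyl) and subtracting yields
\[
\tau_x\left([P^E_0]\right)=(-1)^{\frac{\dim\kp}{2}}\cdot\frac{\chi_{S^+_\kp\otimes E}(x)-\chi_{S^-_\kp\otimes E}(x)}{\left(\chi_{S^+_\kp}(x)-\chi_{S^-_\kp}(x)\right)^2}=(-1)^{\frac{\dim\kp}{2}}\cdot\frac{\chi_E(x)}{\chi_{S^+_\kp}(x)-\chi_{S^-_\kp}(x)},
\]
using $\chi_{S^\pm_\kp\otimes E}=\chi_{S^\pm_\kp}\cdot\chi_E$ in the last step.

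Finally I would rewrite this in Weyl-denominator form. By the Weyl character formula, $\chi_E(x)=\big(\sum_{w\in W_K}(-1)^w e^{w(\mu+\rho_K)}(x)\big)\big/\prod_{\alpha\in\Delta^+(\kk,\kt)}(e^{\alpha/2}-e^{-\alpha/2})(x)$, while the character identity for the spin module from Section~\ref{sec:semiproduct} gives $\chi_{S^+_\kp}(x)-\chi_{S^-_\kp}(x)=\prod_{\alpha\in\Delta^+(\kp,\kt)}(e^{\alpha/2}-e^{-\alpha/2})(x)$ up to sign. Since $\Delta^+(\kg,\kt)=\Delta^+(\kk,\kt)\sqcup\Delta^+(\kp,\kt)$, multiplying the two denominators reproduces $\prod_{\alpha\in\Delta^+(\kg,\kt)}(e^{\alpha/2}-e^{-\alpha/2})(x)$ and gives the asserted formula. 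The independence of $t$ is then immediate: the displayed expression coincides with the value of $\tau_x([P^E_t])$ for $t>0$ computed in the preceding theorem, so the pairing is constant in $t$, consistently with the continuity established in Theorem~\ref{main thm}.

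The step I expect to be the main obstacle is the first one: identifying precisely the image of the Connes--Moscovici projection under the Fourier transform at $t=0$, i.e. verifying that the twisted Dirac operator on $G_0/K$ carries no zeroth-order (curvature) term and becomes exactly $i\,c(z)\otimes\id_E$, and that the resulting matrix entries lie in $C_0(\widehat{\kp},\bK(L^2(K)))^K$ so that Proposition~\ref{prop tau} is genuinely applicable. Controlling the sign in the spin-character identity consistently with the $(-1)^{\frac{\dim\kp}{2}}$ normalization is a secondary bookkeeping point.
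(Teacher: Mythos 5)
Your argument is correct and coincides essentially with the paper's own second proof: the authors likewise identify the Fourier transforms of the heat kernels as $e^{-|z|^2}\cdot\id_{S^\pm_\kp\otimes E}$, apply Proposition~\ref{prop tau} to the difference of the diagonal blocks, and finish with the Weyl character formula, deducing $t$-independence by comparison with the $t>0$ computation. (The paper also records an alternative proof via the continuity statement of Theorem~\ref{contin thm}, but your route is the same as its direct computation, and you even restore the $(-1)^{\dim\kp/2}$ factor that the paper's intermediate display drops.)
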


\begin{proof} Here we provide two different proofs. \\

\noindent{\bf Proof I:} By Theorem \ref{t0 thm}, we know that 
\[
\tau_x \left([P^E_t]\right)  = (-1)^{\frac{\dim \kp}{2}} \cdot \frac{\sum_{w \in W_K} (-1)^w \cdot e^{w(\mu + \rho_K)}(x)}{\prod_{\alpha \in \Delta^+(\kg, \kt)}\left( e^{\frac{\alpha}{2}} - e^{\frac{-\alpha}{2}} \right)(x)},
\]
for all $t >0$. Applying Theorem \ref{contin thm}, 
\[
\lim_{t\to 0}\tau_x \left([P^E_t]\right) = \tau_x \left([P^E_0]\right).
\]

\noindent{\bf Proof II.} We compute $\tau_x \left([P^E_0]\right)$ directly. To be more precise, let $f^\pm \in C_0(K \ltimes \kp)$ be the smoothing kernel of $e^{-D^{E}_{t,\mp}D^{E}_{t,\pm}}$. One can compute that the Fourier transforms
\[
\widehat{f}_\pm(z) = e^{-|z|^2} \cdot \id_{S^\pm_\kp\otimes E} \in  C_0\left(\widehat{\kp}, \bK \left(L^2(K)\right) \right)^K, \quad z \in \widehat{\kp}
\]
respectively. By Proposition \ref{prop tau},  we have that 
\[
\begin{aligned}
\tau_x(f^+-f^-)  = &	\frac{\chi_{S_\kp^+\otimes E}(x) - \chi_{S_\kp^-\otimes E}(x) }{\left(\chi_{S_\kp^+}(x) -\chi_{S_\kp^-}(x) \right)^2}\\
=&\frac{\chi_E(x)}{\left(\chi_{S_\kp^+}(x) -\chi_{S_\kp^-}(x) \right)}.
\end{aligned}
\]
The theorem follows immediately from Weyl character formula for $K$. 

\end{proof}

It is crucial that the group element $x$ in Theorem \ref{t0 thm} is a regular element. In the following we show that the pairing between the $L^2$-trace $\tau_e$ and $K_0(C^*_r(G))$ does change with respect to $t$. 

\begin{theorem}\label{thm:L2}
For the $L^2$-trace, we have that 
\[
\tau_e \left([P^E_t]\right) = t^{\dim \kp}\cdot \tau_e \left([P^E]\right). 
\]	
\end{theorem}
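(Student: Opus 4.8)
The plan is to reduce the pairing to a McKean--Singer supertrace of the heat operator and then track the two distinct ways the parameter $t$ enters: through the Haar measure $d_t g$ and through the Dirac operator $D^E_t$. The first produces the factor $t^{\dim \kp}$, while the second merely rescales the heat parameter and is therefore invisible to the index. Throughout I transport all data to the fixed group $G = G_1$ by means of the diffeomorphism $\varphi_t$.

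First I would pin down the measure scaling. Substituting $u = tv$ in Definition \ref{defn:haar-measure-t} (the same change of variables as in the proof of Theorem \ref{main thm}) turns $\prod_{\alpha}(\sinh(\alpha(tv)))^{n_\alpha}\,dv$ into $t^{-\dim \ka}\prod_{\alpha}(\sinh(\alpha(u)))^{n_\alpha}\,du$, and combined with the prefactor $t^{-(\dim \kp - \dim \ka)}$ and the integration formula of the Lemma this identifies $d_t g$ with $t^{-\dim \kp}\,dg$ under $\varphi_t$. Since $\tau_e$ is the canonical von Neumann trace normalized by the ambient Haar measure, rescaling $dg \mapsto t^{-\dim \kp}\,dg$ rescales the trace of a fixed operator by $t^{\dim \kp}$; writing $\tau_e^{(t)}$ for the trace normalized by $d_t g$, this reads $\tau_e^{(t)} = t^{\dim \kp}\,\tau_e^{(1)}$ once both are read on the same algebra via $\varphi_t$. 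Next I record the operator scaling: the Lie algebra of $G_t$ agrees with $\kg$ on $\kk$ and on the $\kk$--$\kp$ bracket, but its $\kp$-directions correspond under $d\varphi_t$ to $t$ times the $\kp$-directions of $\kg$. Consequently the Dirac induction operator satisfies $D^E_t \simeq t\,D^E$ after transport, so $D^E_{t,\mp}D^E_{t,\pm} \simeq t^2 D^E_{\mp}D^E_{\pm}$ and each heat operator $e^{-D^E_{t,\mp}D^E_{t,\pm}}$ becomes the heat operator of $D^E$ at heat parameter $t^2$.

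Finally I would expand the pairing through the Connes--Moscovici idempotent. Using the trace property and the explicit entries, $\tau_e([P^E_t]) = \tau_e^{(t)}\!\left(e^{-D^E_{t,-}D^E_{t,+}}\right) - \tau_e^{(t)}\!\left(e^{-D^E_{t,+}D^E_{t,-}}\right)$. Substituting the two scalings gives
\[
\tau_e([P^E_t]) = t^{\dim \kp}\Big[\tau_e^{(1)}\!\left(e^{-t^2 D^E_{-}D^E_{+}}\right) - \tau_e^{(1)}\!\left(e^{-t^2 D^E_{+}D^E_{-}}\right)\Big].
\]
By McKean--Singer the bracketed supertrace is independent of the heat parameter $t^2$, hence equals its value at parameter $1$, which is precisely $\tau_e([P^E])$. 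This yields $\tau_e([P^E_t]) = t^{\dim \kp}\,\tau_e([P^E])$, and in particular the vanishing of order $\dim \kp$ at $t=0$.

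The main obstacle is analytic rather than formal: I must justify, in the $L^2$ (Plancherel) rather than the compact setting, both that $\tau_e$ of the Connes--Moscovici idempotent equals the heat supertrace difference (which requires the off-diagonal and $(2,2)$-contributions to cancel and the relevant entries to lie in the domain of $\tau_e$) and that the McKean--Singer independence of the heat parameter holds on the noncompact homogeneous spaces $G_t/K$. Here the uniform Gaussian heat-kernel estimates used around Theorem \ref{contin thm} (and the cited bounds) are what make the von Neumann supertrace finite and its derivative in the heat parameter vanish, so that the heat-time $t^2$ genuinely drops out and only the measure-induced factor $t^{\dim \kp}$ survives.
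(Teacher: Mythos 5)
Your argument is correct, but it takes a genuinely different route from the paper. The paper's proof never touches the heat operators at all: it quotes the Connes--Moscovici formula expressing $\tau_e([P^E_t])$ as the formal degree $\frac{1}{n\cdot 2^n}\left(\Lambda^n\beta^t_\mu\right)([\kp])$, where $\beta^t_\mu(X,Y)=\mu([X,Y]_{\kg_t})$, and then reads off the scaling from the purely algebraic fact that the bracket of two $\kp$-directions in $\kg_t$ is $t^2$ times the bracket in $\kg$, so $\beta^t_\mu=t^2\beta_\mu$ and $\Lambda^n\beta^t_\mu=t^{2n}\Lambda^n\beta_\mu=t^{\dim\kp}\Lambda^n\beta_\mu$. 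You instead split the $t$-dependence into the Haar-measure normalization ($d_tg\leftrightarrow t^{-\dim\kp}\,dg$ under $\varphi_t$, hence $\tau_e^{(t)}=t^{\dim\kp}\tau_e^{(1)}$ on a fixed operator) and the operator rescaling ($D^E_t\simeq t\,D^E$, so the heat parameter becomes $t^2$), and then kill the latter by the heat-parameter independence of the von Neumann supertrace. Both verifications of the $t$-dependence are sound and both ultimately lean on Connes--Moscovici: the paper uses the evaluated form of their $L^2$-index theorem (the explicit formal-degree formula), while you only need its qualitative half (finiteness of the $\Gamma$-supertrace of the heat operators and the McKean--Singer identity in that setting), which is exactly the analytic point you flag at the end and which is indeed the only thing left to quote rather than reprove. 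Your version has the advantage of making visible that the factor $t^{\dim\kp}$ comes entirely from the normalization of $d_tg$ in Definition~\ref{defn:haar-measure-t}; the paper's version is shorter and sidesteps all heat-kernel analysis by working with the closed formula. One cosmetic point: the pairing of the trace with the class $[P^E_t]-\left(\begin{smallmatrix}0&0\\0&1\end{smallmatrix}\right)$ is $\tau_e\big(e^{-D^E_{t,-}D^E_{t,+}}\big)-\tau_e\big(e^{-D^E_{t,+}D^E_{t,-}}\big)$ only after including the $(2,2)$-entry minus $1$; you assert this, and it is the standard computation, but it should be recorded since the off-diagonal entries contribute nothing only because they are traceless in the graded sense, not because they vanish.
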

\begin{proof}
We follow the approach in \cite{CM}. First fix a volume form 
\[
\omega \in \Lambda^{\dim \kp} \kp^*.
\]
For any top degree form $\lambda \in \Lambda^{\dim \kp} \kp^* $, we define $\lambda([\kp])\in \mathbb{C}$ by
\[
\lambda = \lambda([\kp]) \cdot \omega. 
\] 
Then we identify the highest weight $\mu$ of $E$ as an element in $\kg^*$ and define 
\[
\beta_\mu \in \Lambda^2 \kp^*
\] 
by the following
\[
\beta_\mu(X, Y) = \mu\left([X, Y]_\kg\right), \quad X, Y \in \kp. 
\] 
By \cite[Proposition 7.1 A]{CM}, 
\begin{equation}
\label{CM}
\begin{aligned}
&\tau_e \left([P^E]\right) \\
=& \text{formal degree of the discrete series representation determined by $E$}\\
=&\frac{1}{n\cdot 2^n} \left(\Lambda^n \beta_\mu\right)\left([\kp]\right). 
\end{aligned}
\end{equation}
where $n = \frac{\dim \kp}{2}$. 

The family of Lie algebras $\kg_t$ can be identified with $\kg$ by the following map: for any $X = X_\kk + X_\kp \in \kk \oplus \kp$, we define 
\[
X^t = X_\kk + \frac{1}{t}\cdot X_\kp \in \kg_t. 
\]
One can check that 
\begin{equation}
\label{bracket t}
\begin{cases}
[X, Y]_{\kg_t} = 	[X, Y]_\kg & X, Y \in \kk,\\
[X, Y]_{\kg_t} = 	t^2 \cdot [X, Y]_\kg & X, Y \in \kp,\\
[X, Y]_{\kg_t} = 	[X, Y]_\kg & X \in \kk, Y \in \kp. \\
\end{cases}
\end{equation}
Applying the formula (\ref{CM}) to $\kg_t$, we obtain the formula of $\beta^t_\mu$ from (\ref{bracket t}) 
\[
\beta_\mu^t = t^2 \cdot \beta_\mu.  
\]
Therefore, we conclude
\[
\tau_e \left([P^E_t]\right) = t^{\dim \kp}\cdot \tau_e \left([P^E]\right). 
\]
\end{proof}

Theorem \ref{thm:L2} suggests that in general for a singular element $x$, $\tau_x([P^E_t])$ changes with respect to $t$ and the vanishing order  depends on $x$.

\bibliographystyle{amsalpha}

\end{document}